\documentclass[11pt]{amsart}
 
\usepackage{amsmath,amsthm,amssymb,graphicx,mathtools,hyperref,bm,verbatim,color,soul,amsfonts,pifont,centernot,bbm,float,dsfont,mathrsfs,tcolorbox,comment,enumitem} 
\usepackage[margin=1in]{geometry} 
\usepackage[utf8]{inputenc} 
\usepackage[english]{babel} 
\usepackage{quotes}


\setcounter{tocdepth}{2}
\setcounter{secnumdepth}{2}

\frenchspacing
\allowdisplaybreaks
\hypersetup{colorlinks,linkcolor=blue}

\newcommand{\n}{\mathbb{N}}

\newcommand{\real}{\mathbb{R}}
\newcommand{\R}{\mathbb{R}}
\newcommand{\bra}{\langle}
\newcommand{\ket}{\rangle}

\newcommand{\bound}[1]{\partial{#1}}
\newcommand{\mbound}[1]{m(\partial{#1})}
\newcommand{\inti}[1]{\E({#1})}
\newcommand{\minti}[1]{m\big(\E({#1})\big)}

\newcommand{\rayleigh}[1]{ \frac {\bra \Delta {#1},{#1} \ket} {\bra {#1},{#1} \ket} }

\newcommand{\compl}{\mathsf{c}}
\newcommand{\h}{\overline{h}}
\newcommand{\supp}{{\sf supp}}
\renewcommand{\k}{{\sf k}}

\DeclareMathOperator{\V}{V}
\DeclareMathOperator{\E}{E}

\DeclareMathOperator{\VV}{\mathcal{V}}
\DeclareMathOperator{\Prob}{Prob}

\DeclareMathOperator{\I}{I}

\theoremstyle{definition}
	\newtheorem{definition}{Definition}
	
\theoremstyle{plain}
	\newtheorem{theorem}[definition]{Theorem}
	\newtheorem{lemma}[definition]{Lemma}
	\newtheorem{corollary}[definition]{Corollary}
	\newtheorem{proposition}[definition]{Proposition}

\theoremstyle{remark}
	\newtheorem{remark}[definition]{Remark}
	\newtheorem{example}[definition]{Example}
	\newtheorem{question}[definition]{Question}
	\newtheorem{problem}[definition]{Problem}
	
	\newtheorem{notation}[definition]{Notation}

\title{Spectra of infinite graphs with summable weight functions}
\author{Michael Farber}
\address{School of Mathematical Sciences\\
Queen Mary University of London\\ E1 4NS London\\UK.}
\email{m.farber@qmul.ac.uk}
\author{ Lewin Strauss}
	\address{School of Mathematical Sciences\\
Queen Mary University of London\\ E1 4NS London\\UK.}
\email{lewin.strauss@gmail.com}
	

	\keywords{Infinite weighted graph; spectrum; Laplace operator; random walk; Cheeger contant; dual Cheeger constant}
	\thanks{Both authors were partially supported by a grant from the Leverhulme Trust}
\subjclass{05C50; 05C63; 05C48; 05C81}


\begin{document}
\begin{abstract} In this paper we study spectra of Laplacians of infinite weighted graphs. Instead of the assumption of local finiteness we impose the condition of summability of the weight function. 
Such graphs correspond to reversible Markov chains with countable state spaces. 
We adopt the concept of the Cheeger constant to this setting and prove an analogue of the Cheeger inequality characterising the spectral gap. 
We also analyse the concept of the dual Cheeger constant originally introduced in \cite{B14}, which allows estimating 
the top of the spectrum. In this paper we also introduce a new combinatorial invariant, $\k(G,m)$, which allows a complete characterisation of bipartite graphs and measures the asymmetry of the spectrum (the Hausdorff distance between the spectrum and its reflection at point $1\in \Bbb R$). We compare $\k(G, m)$ to the Cheeger and the dual Cheeger constants. 
Finally, we analyse in full detail a class of infinite complete graphs and their spectra.
\end{abstract}

\maketitle

\noindent
Data sharing is not applicable to this article as no datasets were generated or analysed during the current study.

\section{Introduction}

Spectral graph theory is a well-developed field of mathematics lying at the crossroads between combinatorics, analysis and geometry. It has multiple applications in engineering and computer science \cite{Chu}. The celebrated {\em Cheeger inequalities} relate the geometry of a graph with the spectral properties of the graph Laplacian \cite{D84}. The Cheeger inequalities 
play a crucial role in theory of expander graphs \cite{HLW06}, which are sequences of finite graphs with certain asymptotic properties. The literature on spectral graph theory contains many important results about finite and infinite graphs and their spectral properties, see for instance \cite{Chu, Keller, Wo}.

In this paper, we consider countable weighted graphs that are not necessarily locally finite but
we impose an assumption of summability on the weights, i.e. we require the sum of all edge weights to be finite. 
Such {\em summable weighted graphs} can be thought of as representations of reversible countable Markov chains \cite{Wo}.

We introduce three geometric constants and analyse their bearing on the spectral properties of the normalised graph Laplacian of summable weighted graphs. The Cheeger constant and the dual Cheeger constant introduced in this paper can be compared to the invariants studied in \cite{B14}. There are several significant differences between \cite{B14} and our approach: (a) in \cite{B14} the authors study a Dirichlet type Laplace operator and their approach is applicable to locally finite graphs only; 
(b) The Cheeger constant defined in \cite{B14} measures bottom of the spectrum which is automatically $0$ for summable weighted graphs studied in this paper. Our Cheeger constant measures the spectral gap rather than bottom of the spectrum.

Our new combinatorial invariant, $\k(G, m)$, affords a complete characterisation of bipartite graphs: it vanishes if and only if the graph is bipartite. We also show that the new invariant $\k(G, m)$ allows estimating the asymmetry of the spectrum (Theorem \ref{thm:mains}). 

In section \ref{sec:5} we show several examples of graphs $G$ such that different summable weight functions on $G$ have either vanishing or non-vanishing spectral gap illustrating the fact that the property to have a 
non-vanishing spectral gap (i.e. \lq\lq expansion\rq\rq) strongly depends on the weight function (and not just on the underlying graph). 

In more detail, our main results can be summarised as follows.
Let $(G,m)$ denote a connected summable weighted graph, and let $\sigma(\Delta) \subset [0,2]$ denote the spectrum of the associated Laplacian $\Delta$.
Summability implies $0 \in \sigma(\Delta)$.  
We provide geometric estimates for the spectral gap and top of the spectrum,
\[ \lambda_{gap}(\Delta) = \inf\{\sigma(\Delta)-\{0\}\}\quad
\text{ and } \quad
\lambda_{top} = \sup \{\sigma(\Delta)\}. \]
Namely, we define a {Cheeger constant} $h(G,m)$ and a {dual Cheeger constant} $\overline{h}(G,m)$, and we prove (cf. Theorem \ref{thm:ci})
\begin{align*}
	1 - \sqrt{ 1 - h(G,m)^2} \, \leq \, &\lambda_{gap}(\Delta) \leq 2 \cdot h(G,m), \\
 2 \cdot \overline{h}(G,m) \, \leq \, &\lambda_{top}(\Delta) \leq 1 + \sqrt{1 - \big(1 - \overline{h}(G,m)\big)^2 }.
 \end{align*}
Moreover, we define a new geometric constant ${\k}(G,m)$ and show that the Hausdorff distance between 
$\sigma(\Delta) \subset [0,2]$ and its reflection around 1 is bounded above by $2 \cdot {\k}(G,m)$ (Theorem \ref{thm:mains}). 

Finally, in sections \ref{sec8}, \ref{sec9}, and \ref{sec10}, we analyse a rich class of infinite complete graphs whose spectra admit a particularly detailed description.

We are not in a position to survey the vast literature which pertains to various classes of graphs, 
various Cheeger-type combinatorial constants, various graph Laplacians, and various aspects of the spectra of these Laplacians. As far as we know the results contained in our paper are new and are not contained in any previously published articles. 

We may mention  \cite{JEMS15} where the authors use the concept of intrinsic metrics and develop a comprehensive framework for countable weighted graphs, of which our model is a special case. The authors introduce a Cheeger-type constant (distinct from ours) and use it to bound the bottom of the spectrum of the graph Laplacian. Our assumption of summability implies that the bottom of the spectrum is $0$. 
As another example we may mention Theorem  3.5 of \cite{MS03} which provides  a  lower  bound  of  the  spectral  gap of a normalised Laplacian, but in \cite{MS03} the  underlying  graphs  are  implicitly  assumed  to  be locally finite as follows  from Definition 2.2 of \cite{MS03}.\footnote{For any oriented edge $vw$, the ratio $i(wv) / i(vw)$ is bounded if and only if $\mu(v)/\mu(w)$ is bounded. But $\mu$ is assumed to be summable over vertices, so the vertex $v$ can only have finitely many adjacencies.}

The authors thank Norbert Peyerimhoff for helpful advice.

\section{Summable weighted graphs} \label{sec:prelims} 

\subsection{Definitions}
A graph is a 1-dimensional simplicial complex. For a graph $G$, we denote by $\V(G)$ and $\E(G)$ the sets of vertexes and edges, respectively. We say that $G$ is countable if the vertex set $\V(G)$ is either finite or countably infinite. 
{\it A weight function} on $G$ is a function $m: \E(G)\to (0,\infty)$. {\it A weighted graph} is a pair $(G, m)$ consisting of a graph $G$ and a weight function $m$ on $G$. 

\begin{definition} We shall say that a countable weighted graph $(G, m)$ is {\it summable} if the sum of all edge weights is finite, $\sum_{e \in \E(G)} m(e) < \infty$. 
\end{definition}

The weight function of a summable weighted graph $(G, m)$ naturally extends to the vertexes: we set 
$m(v)=\sum_{v \in e} m(e).$ 
In other words, the weight of a vertex is the sum of the edge weights over all edges that are incident to it ("weighted degree"). 
According to this definition, a vertex has weight $0$ iff it is isolated. 

Below we shall consider only graphs without isolated vertexes; we shall have $m(v)>0$ for any vertex $v$. 
The resulting function $m: \V(G)\to [0, \infty)$ defines a $\sigma$-additive measure on $\V(G)$. The weight 
$m(S)$ of a subset $S\subset \V(G)$ is defined as $\sum_{v\in S} m(v)$, the sum of the weights of all elements of $S$. Note that 
$$\sum_{v\in \V(G)} m(v) = 2\cdot \sum_{e\in \E(G)} m(e) < \infty.$$

We shall consider the Hilbert space  $L^2(G,m)$ 
of square integrable functions 
$f\colon \V(G) \rightarrow \real$
with respect to $m$. The elements $f\in L^2(G,m)$
 satisfy
		$ \sum_{v\in \V(G)} m(v) \cdot f(v)^2 < \infty. $
The inner product of $L^2(G,m)$ is given by
$$ \langle f,g\rangle = \sum_{v\in \V(G)} m(v) \cdot f(v) \cdot g(v).$$
Note that any constant function is square integrable, i.e. constant functions belong to $L^2(G,m)$.

The {\em normalised Laplacian} 
of a summable weighted graph $(G,m)$ without isolated vertexes is defined by
\begin{eqnarray}\label{def:lap}
\Delta f(v) = f(v) - \sum_{w\sim v} \frac{m(vw)}{m(v)} \cdot f(w), \quad f\in L^2(G, m).
\end{eqnarray}
Using the Cauchy-Schwarz inequality, one sees that the sum in (\ref{def:lap}) converges for any $f\in L^2(G, m)$. More precisely, for any $f\in L^2(G, m)$, 
\begin{eqnarray}\label{ineq111}  \sum_w m(vw)f(w) 
\le 
\left[\sum_w m(w)\left(\frac{m(vw)}{m(w)}\right)^2\right]^{1/2}\cdot ||f||\le m(v)^{1/2}\cdot ||f||,\end{eqnarray}
where $$||f||^2 = \langle f, f\rangle=\sum_v m(v)f(v)^2.$$
Note the following formula:
\begin{equation} \label{eq:delhelp}
		\bra \Delta f, f \ket = \sum_{vw \in \E(G)} m(vw) \cdot \big( f(v) - f(w)\big)^2 .
		 \end{equation}

\begin{lemma} \label{lem:prop}
For a summable weighted graph $(G, m)$, the Laplacian $\Delta\colon L^2(G,m) \rightarrow L^2(G,m)$ is well-defined; it is self-adjoint, non-negative, and bounded. Moreover, the spectrum $\sigma(\Delta)$ of the Laplacian lies in $[0,2]$.
Any constant function $f: \V(G)\to \R$ satisfies $\Delta f=0$, and thus $0\in \sigma(\Delta)$ is an eigenvalue.
\end{lemma}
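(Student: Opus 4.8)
The plan is to factor the Laplacian as $\Delta = \I - P$, where $P$ is the averaging (Markov) operator $Pf(v) = \sum_{w\sim v} \frac{m(vw)}{m(v)} f(w)$, and to deduce every claimed property from an analysis of $P$. The first task, and the main technical hurdle, is to show that $P$ (hence $\Delta$) maps $L^2(G,m)$ into itself and is bounded. The pointwise estimate coming from (\ref{ineq111}) alone does not suffice: it only gives $|Pf(v)| \le m(v)^{-1/2}\|f\|$, which when integrated against $m(v)$ yields $\sum_v \|f\|^2$, divergent for an infinite vertex set. Instead I would exploit that $P$ is stochastic: since $\sum_{w} \frac{m(vw)}{m(v)} = 1$, the numbers $p(v,w) := m(vw)/m(v)$ form a probability distribution for each fixed $v$, so Jensen's inequality (applied to $x\mapsto x^2$, equivalently Cauchy-Schwarz) gives the pointwise bound $|Pf(v)|^2 \le \sum_w p(v,w) f(w)^2$.

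Integrating this against the vertex measure and invoking reversibility $m(v)p(v,w) = m(vw) = m(w)p(w,v)$ together with $\sum_v m(vw) = m(w)$ yields
$$\|Pf\|^2 \le \sum_{v,w} m(vw) f(w)^2 = \sum_w m(w) f(w)^2 = \|f\|^2,$$
so $P$ is a contraction. This simultaneously shows $\Delta f \in L^2(G,m)$ for every $f$ and that $\Delta = \I - P$ is bounded with $\|\Delta\| \le 2$.

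Self-adjointness of $\Delta$ then reduces to that of $P$, which follows from the same reversibility identity: expanding $\langle Pf, g\rangle = \sum_{v,w} m(vw) f(w) g(v)$, the right-hand side is symmetric in $f$ and $g$ after relabelling the dummy indices $v,w$ and using $m(vw)=m(wv)$, whence $\langle Pf, g\rangle = \langle f, Pg\rangle$. Non-negativity is immediate from formula (\ref{eq:delhelp}), which expresses $\langle \Delta f, f\rangle$ as a sum of non-negative terms. For the spectral localisation, since $\Delta$ is bounded and self-adjoint its spectrum lies in $[\inf_{\|f\|=1}\langle \Delta f, f\rangle,\ \sup_{\|f\|=1}\langle\Delta f,f\rangle]$; the lower end is $\ge 0$ by non-negativity, and writing $\langle\Delta f,f\rangle = \|f\|^2 - \langle Pf,f\rangle \le \|f\|^2 + \|Pf\|\,\|f\| \le 2\|f\|^2$ (using Cauchy-Schwarz and the contraction bound) gives upper end $\le 2$, so $\sigma(\Delta)\subset[0,2]$.

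Finally, for a constant function $f\equiv c$ one computes directly $\Delta f(v) = c - c\sum_{w\sim v} m(vw)/m(v) = c - c = 0$, so constants lie in $\ker\Delta$. Here summability is essential: it guarantees $\sum_v m(v) = 2\sum_e m(e) < \infty$, so nonzero constants genuinely belong to $L^2(G,m)$, and therefore $0$ is an eigenvalue with $0\in\sigma(\Delta)$. I expect the contraction estimate for $P$ to be the only step demanding real care; once $P$ is controlled, self-adjointness, non-negativity, the spectral bound, and the eigenvalue at $0$ all follow routinely.
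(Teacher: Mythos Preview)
Your proposal is correct and follows the paper's route: write $\Delta = I - P$, show $P$ is a self-adjoint contraction, and conclude $\sigma(\Delta)\subset[0,2]$. Your Jensen-based argument for $\|P\|\le 1$ is in fact more complete than the paper's own proof, which only cites the pointwise estimate (\ref{ineq111}) --- a bound that, as you correctly observe, does not by itself yield the $L^2$ contraction.
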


\begin{proof}
We have $\Delta = I-P$, where
\begin{eqnarray}\label{pp}
P(f)(v) = m(v)^{-1} \sum_w m(vw)f(w).
\end{eqnarray} 
Clearly $P$ is self adjoint.
Using (\ref{ineq111}), one sees that 
$||P(f)|| \le ||f||$, which implies that the spectrum of $P$ lies in $[-1,1]$. Therefore $\sigma(\Delta)\subset [0, 2]$. 
\end{proof}
	
Clearly, if the graph $G$ is infinite not every point of the spectrum $\sigma(\Delta)$ is an eigenvalue. 

\subsection{Spectral gap}

We shall be interested in the {\em spectral gap} of $\Delta$, defined by $ \lambda_{gap}(\Delta) = \inf \{ \lambda \in \sigma(\Delta)\colon \lambda > 0 \}.$ 
The spectral gap can be characterised as follows:
\begin{equation} \label{eq:char}
					 \lambda_{gap} = \inf \bigg\{ \rayleigh{f} \colon f \in L^2(G,m), \ f \perp {\bf 1} \bigg\},  \end{equation}
 see \cite{RS}, Chapter 13. Here ${\bf 1}\colon \V(G)\to \R$ is the constant function equal to $1$ at all points.

\begin{lemma} If $(G, m)$ is a summable weighted graph such that the underlying graph $G$ is either infinite or it is finite but not a complete graph, then $\lambda_{gap}\le 1$. 
\end{lemma}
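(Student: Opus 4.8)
The plan is to work directly from the variational characterization \eqref{eq:char} and the edge formula \eqref{eq:delhelp}: to conclude $\lambda_{gap}\le 1$ it suffices to exhibit test functions $f\in L^2(G,m)$ with $f\perp{\bf 1}$ whose Rayleigh quotients $\rayleigh{f}$ are either equal to $1$ or have infimum $1$. For any such $f$, I would compare $\langle\Delta f,f\rangle=\sum_{vw\in\E(G)}m(vw)\big(f(v)-f(w)\big)^2$ against $\langle f,f\rangle=\sum_{v}m(v)f(v)^2$. The construction splits naturally according to whether $G$ admits two non-adjacent vertices.

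The first case is when $G$ has two non-adjacent vertices $u,v$; this is automatic when $G$ is finite but not complete (and also covers the non-complete infinite case). Here I would take $f$ supported on $\{u,v\}$ with $f(u)=m(v)$, $f(v)=-m(u)$, and $f\equiv 0$ elsewhere. Orthogonality to ${\bf 1}$ is immediate since $\langle f,{\bf 1}\rangle=m(u)m(v)-m(v)m(u)=0$. The crucial observation is that since $u$ and $v$ are non-adjacent, every edge contributing a nonzero term in \eqref{eq:delhelp} is incident to exactly one of $u,v$, the other endpoint carrying value $0$; summing these contributions gives $\langle\Delta f,f\rangle=m(u)f(u)^2+m(v)f(v)^2=\langle f,f\rangle$, whence $\rayleigh{f}=1$ and $\lambda_{gap}\le1$.

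The remaining case is $G$ complete, which under the hypothesis forces $G$ to be infinite, since a finite complete graph is excluded. Now no two vertices are non-adjacent, so the test function above is unavailable; this is the main obstacle, and I would overcome it by exploiting summability. For a vertex $u$ set $f_u=\delta_u-\tfrac{m(u)}{M}{\bf 1}$, where $\delta_u$ is the function equal to $1$ at $u$ and $0$ elsewhere and $M=\sum_v m(v)<\infty$; then $f_u\perp{\bf 1}$. Using $\Delta{\bf 1}=0$ and self-adjointness, $\langle\Delta f_u,f_u\rangle=\langle\Delta\delta_u,\delta_u\rangle=m(u)$ by \eqref{eq:delhelp}, while a direct expansion gives $\langle f_u,f_u\rangle=m(u)\big(1-\tfrac{m(u)}{M}\big)$, so $\rayleigh{f_u}=\big(1-\tfrac{m(u)}{M}\big)^{-1}$. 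Since $G$ is infinite and $\sum_v m(v)$ converges, we must have $\inf_u m(u)=0$ (otherwise a uniform positive lower bound on infinitely many weights would make the sum diverge); hence the infimum of these Rayleigh quotients equals $1$, and \eqref{eq:char} yields $\lambda_{gap}\le1$.

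Finally I would remark that the second construction in fact applies to \emph{every} infinite graph, so non-adjacency is genuinely needed only in the finite non-complete case; conversely the identity $\rayleigh{f_u}=\big(1-\tfrac{m(u)}{M}\big)^{-1}>1$ makes transparent why the finite complete graph must be excluded from the statement.
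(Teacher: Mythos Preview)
Your proof is correct. In the non-adjacent case you use exactly the same two-point test function $f(u)=m(v)$, $f(v)=-m(u)$ as the paper, reaching the Rayleigh quotient $1$ on the nose. In the infinite complete case the paper keeps the same two-point function $f_{ab}$ and computes $\rayleigh{f_{ab}}=1+\dfrac{2\,m(ab)}{m(a)+m(b)}$, then sends $m(ab_i)\to 0$ via summability; you instead take the orthogonal projection $f_u=\delta_u-\tfrac{m(u)}{M}\,{\bf 1}$ and obtain $\rayleigh{f_u}=\bigl(1-\tfrac{m(u)}{M}\bigr)^{-1}\to 1$ as $m(u)\to 0$. Both arguments exploit summability in the same way (existence of vertices of arbitrarily small weight), so the difference is a matter of which test family you choose. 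Your construction has the mild advantage, which you note, that it works uniformly for \emph{any} infinite summable graph without reference to adjacency, while the paper's single formula $1+\tfrac{2m(ab)}{m(a)+m(b)}$ has the virtue of handling both cases with one computation.
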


\begin{proof} For a couple of vertexes $a, \, b\in \V(G)$ define $f_{ab} \in L^2(G,m)$, 
 via $f(a) = m(b)$, $f(b)=-m(a)$ and $f(v)=0$ for $v\notin \{a, b\}$. Then $f_{ab}\perp \bf 1$, and 
 using formula (\ref{eq:delhelp}) we find
    \[ \rayleigh{f_{ab}} = 1 +2\cdot \frac {m(ab)}{ m(a) + m(b)}.\]
    If $G$ is not a complete graph we can select the vertices $a, b$ such that $m(ab)=0$ (i.e. the edge connecting $a$ and $b$ is not in $G$); then $\lambda_{gap}\le 1$ by (\ref{eq:char}). 
    
If $G$ is complete and infinite then we can choose a sequence $b_i$ of vertices that satisfies 
$m(ab_i) \to 0$; such sequence exists since $G$ is summable. Then the sequence $$\rayleigh{f_{ab_i}}$$ converges to 1 implying $\lambda_{gap}\le 1$ by (\ref{eq:char}).
\end{proof}

\begin{remark}
There are examples of graphs with spectral gap greater than 1: for a complete graph on $n$ vertices weighted by $m(e) = 1$ for all $e \in \E(G)$, the spectral gap equals $\frac{n}{n-1}>1$ (see \cite{Chu},  pg. 6). 
\end{remark}

%
%
%

\subsection{Summable weighted graphs as reversible Markov chains} \label{sec:markov} 
A weighted graph $(G, m)$ with summable weight function $m$ determines a Markov chain with the state space $\V(G)$, where the probability of moving from $v$ to $w$ equals $$p_{vw}=\frac{m(vw)}{m(v)}.$$ 
As above, we assume that $G$ has no isolated vertexes. If we write 
$M= \sum_v m(v) = 2\sum_e m(e)$, then the function 
$v\mapsto \phi(v) = m(v)\cdot M^{-1}$ is a stationary probability distribution on $\V(G)$. 
The Markov chains arising from summable weighted graphs are reversible and recurrent, see \cite{Wo}.

\section{The Cheeger constant of a summable weighted graph and its dependence on the weight function}\label{sec:5}

\subsection{Definition of the Cheeger constant} \label{subsec:def} 
        
        The Cheeger constant is a real number between 0 and 1 that, intuitively speaking, measures the robustness of a connected weighted graph $(G, m)$.

  Let $S$ be a set of vertices in $G$.
        	 The {\em boundary} of $S$, denoted $\partial S$, is the set of all edges in $G$ with one endpoint in $S$,
        			\[ \partial S = \{ e \in \E(G) \colon |e \cap S| = 1 \}. \]
        		 The {\em interior} of $S$, denoted $\I(S)$, is the set of all edges in $G$ with both endpoints in $S$,
        			\[ \I(S) = \{ e \in \E(G) \colon |e \cap S| = 2\}.\]
        			
      We shall denote by $S^c$ the complement, $S^c=\V(G)-S$. Besides, $m(S)$ stands for $\sum_{v\in S} m(v)$ and $m(\partial S)$ denotes $\sum_{e\in \partial S} m(e)$.  These entities are related via
      \begin{eqnarray}\label{eq:weights}
      m(S) = m(\partial S) + 2\cdot m(\I(S)).
      \end{eqnarray}	 
      	        
        \begin{definition} \label{def:iso}
       	 	Let $S$ be a non-empty set of vertices in $G$.
		\begin{itemize}
        			\item[(a)] The {\em Cheeger ratio} of $S$, denoted $h(S)$, is the number
        			\[ h(S) = \frac{m(\partial S)}{m(S) }. \]
        			\item[(b)] The {\em Cheeger constant} of $(G,m)$, denoted $h(G,m)$, is the number
        			\[ h(G,m) = \inf \big\{ h(S) \big\},\]
        			where the infimum is taken over all non-empty sets of vertices $S$ that satisfy 
        			\begin{equation} \label{eq:half}
        				m(S) \leq m({S}^\compl).
        				\end{equation}
			\end{itemize}
        			
       	 	\end{definition} 
It follows from Equation (\ref{eq:weights}) that 
       $ h(G,m) \in [0,1].$

        We consider the Cheeger constants of some interesting weighted graphs in Subsection \ref{subsec:examples}.
        
	

\subsection{Cheeger sets} \label{subsec:cheegsets} 
A set of vertexes $S\subset \V(G)$ satisfying (\ref{eq:half}) is a Cheeger set if the induced subgraph $G_S$ is connected. The collection of {\em Cheeger sets} in $(G,m)$ be denoted $\VV(G,m)$.
 \begin{lemma} \label{lem:prac}
        		The Cheeger constant $h(G, m)$ equals the infimum of the Cheeger ratios taken over all Cheeger sets:
        		\[ h(G,m) = \inf \big\{ h(S)\colon  S \in \VV(G,m)\big\}.\]
\end{lemma}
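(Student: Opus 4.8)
The plan is to establish the two opposite inequalities. One direction is immediate: every Cheeger set is in particular a nonempty vertex set satisfying the constraint (\ref{eq:half}), so $\VV(G,m)$ is contained in the family of sets over which the infimum in Definition \ref{def:iso}(b) is taken, whence $h(G,m) \le \inf\{h(S)\colon S \in \VV(G,m)\}$. For the reverse inequality I would show that every admissible $S$ (nonempty, satisfying (\ref{eq:half})) contains a connected piece that is a Cheeger set of no larger Cheeger ratio; taking the infimum over all such $S$ then yields $\inf\{h(S)\colon S \in \VV(G,m)\} \le h(G,m)$.

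The main step is a connected-component decomposition. Write $S = \bigsqcup_i S_i$, where the $S_i$ are the vertex sets of the connected components of the induced subgraph $G_S$. The key combinatorial observation is that, by definition of component, $G$ has no edge joining distinct $S_i$; hence every edge of $\partial S$ has its unique endpoint in $S$ lying in exactly one $S_i$ and its other endpoint in $S^\compl$. This gives the disjoint decomposition $\partial S = \bigsqcup_i \partial S_i$, so that $m(\partial S) = \sum_i m(\partial S_i)$ and $m(S) = \sum_i m(S_i)$, with each $m(S_i) > 0$ (no isolated vertices) and $\sum_i m(S_i) = m(S) < \infty$. Thus $h(S)$ is a weighted mediant of the ratios $h(S_i) = m(\partial S_i)/m(S_i)$, and by the elementary mediant inequality—if $a_i \ge d\, b_i$ for all $i$ with $b_i \ge 0$ and $0 < \sum_i b_i < \infty$, then $\sum_i a_i \ge d \sum_i b_i$—one gets $\inf_i h(S_i) \le h(S)$.

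It remains to check that each component is a genuine Cheeger set. By construction $G_{S_i}$ is connected, and $S_i$ satisfies (\ref{eq:half}): since $S_i \subseteq S$ we have $m(S_i) \le m(S)$, and since $S^\compl \subseteq S_i^\compl$ we have $m(S^\compl) \le m(S_i^\compl)$, so $m(S_i) \le m(S) \le m(S^\compl) \le m(S_i^\compl)$. Hence every $S_i \in \VV(G,m)$, and therefore $\inf\{h(S')\colon S' \in \VV(G,m)\} \le \inf_i h(S_i) \le h(S)$, as required. The only point demanding care is that $G_S$ may have infinitely many components; this is exactly why the mediant inequality is invoked for possibly infinite index sets, and since we need only that the infimum of the $h(S_i)$ be bounded by $h(S)$ (no single component need attain the bound), this case is handled without extra effort. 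I expect this infinite-component bookkeeping, together with the verification that $\partial S$ splits disjointly across the components, to be the only—and modest—obstacle.
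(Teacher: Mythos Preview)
Your proof is correct and follows essentially the same approach as the paper's: decompose $S$ into the vertex sets $S_i$ of the connected components of $G_S$, use that $\partial S$ splits disjointly as $\bigsqcup_i \partial S_i$, and apply the mediant inequality to conclude $h(S) \ge \inf_i h(S_i)$. You are in fact more thorough than the paper, which asserts ``since $S_i$ is a Cheeger set for all $i$'' without spelling out the verification of (\ref{eq:half}) for each component; your monotonicity argument $m(S_i) \le m(S) \le m(S^\compl) \le m(S_i^\compl)$ fills this in cleanly, and your explicit remark about the infinite-component case is likewise a point the paper leaves implicit.
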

%
%
%
	
\begin{proof}
		Let $S\subset \V(G)$ be a non-empty subset that satisfies (\ref{eq:half}).
            	We may enumerate the connected components of the induced subgraph $G_S$ and denote by $S_i$ the vertex set of the $i$-th component. Then for $i\not=j$, one has $\partial S_i \cap \partial S_j =\emptyset$ and $m(\partial S) = \sum_i m(\partial S_i)$. 
	We obtain
            	\[ h(S) = \frac{ \sum_i m(\partial S_i) }{ \sum_i m(S_i) } \geq \inf_i\left\{\frac{m(\partial S_i)}{m(S_i}\right\}
=
\inf_i\big\{ h(S_i)\big\}. \]
            	Since $S_i$ is a Cheeger set for all $i$, the result follows from Definition \ref{def:iso}(b).
		\end{proof}

\subsection{Examples} \label{subsec:examples} 

	The Cheeger constant of a weighted graph depends not only on the structure of the underlying graph, but also on its weight function.
	In this subsection, we we consider two structurally very different graphs and equip each of them with two different summable weight functions.
	Remarkably, both graphs exhibit a vanishing Cheeger constant in one case and a large Cheeger constant in the other.        		
		

%
        			
        \begin{example} [The infinite complete graph with weight function $m_1$] \label{ex:comp1} 
        Denote by $K$ the infinite complete graph with vertex set $\V(K) = \n$. We show below that different weight functions on $K$ can lead to vastly different Cheeger constants.

            	Let $(p_i)_{i \in \n}$ be a sequence of positive real numbers that sum to one,
		$\sum_{i \in \n} p_i = 1.$		
            	We define a weight function $m_1$ on the infinite complete graph $K$ by
            	\[ m_1(ij) = p_i \cdot p_j , \quad i, j\in \n.\]
	We have $\sum_{ij} m(ij)=1$, i.e. this weight function is summable. Besides, 
    $m_1(i) = p_i - p_i^2$ and therefore $m_1(\n)=1- \sum_i p_i^2$. 
		Every Cheeger set $S \in \VV(K,m_1)$ satisfies
            	\begin{align*}
            		h(S) 				
            		= \frac{ \sum_{i\in S} p_i \cdot \sum_{j\not \in S} p_j}{m_1(S)} 
            		>\frac{ m_1(S) \cdot m_1({S}^\compl) } {m_1(S)} 
            		\geq \frac{m_1(\n)}{2}.
            		\end{align*}
            	Therefore, for the Cheeger constant one has
            	\[ h(K, m_1)  \geq \frac{m_1(\n)}{2} .\]
            	\end{example}
        
        Example \ref{ex:comp1} shows that we can equip the infinite complete graph with a summable weight function such that the Cheeger constant of the resulting weighted graph is relatively large.
        Whilst it is tempting to attribute this observation solely to the robustness of complete graphs, the following example suggests otherwise. In section \ref{sec8} we shall describe the spectrum of the weighted graph $(K, m_1)$ in more detail; in particular, we shall see that the spectral gap equals 1, cf. Proposition \ref{prop:spectrumd}. 
        		
        \begin{example}[The infinite complete graph with weight function $m_2$] \label{ex:comp2}
            	Now we define a different weight function $m_2$ on the infinite complete graph $K$ via
            	\[ m_2(ij) = 
            	\begin{cases} 
               		\frac{1}{j^2}& \mbox{ if } |j-i| = 1, \\
              		\frac{1}{j!} & \mbox{ if } |j-i| > 1, 
              		\end{cases} 
            		\quad \text{where } j > i. \]
		The weight function $m_2$ is summable:
		\[ \sum_{ij} m(ij) = \sum_{i = 1}^\infty \bigg( \frac{1}{i^2} + \frac{i -1}{i!} \bigg) < \infty .\]
            	Write $T_n =  \{ n, n+1, \ldots \}$. 
            	The boundary and the interior of $T_n$ satisfy
            	\begin{align*}
            		m_2(\partial T_n) &= \frac{n-1}{n!} + \frac{1}{n^2} +  n\cdot \sum_{i = n + 1}^\infty \frac{1}{i!},  \\
            		m_2\big(\I(T_n)\big) &= \sum_{ i = n+2}^\infty \frac{i-n-1}{i!} +  \sum_{ i = n+1}^\infty \frac{1}{i^{2}} \ge \frac{1}{4n} . 
            		 \end{align*}
            	With regard to the third summand in the first expression, we observe
            	\[ n\cdot \sum_{i = n + 1}^\infty \frac{1}{i!} < n \cdot \frac{1}{n!} \cdot \sum_{i = 1}^\infty \frac{1}{(n+1)^i} = \frac{1}{n!}. \]
            	It follows that, for large $n$, the boundary of $T_n$ satisfies
            	$m_2(\partial T_n) < \frac{3}{n^2}.$
            	Therefore, 
            	\[ \frac{ m_2\big(\I(T_n)\big)}{m_2(\partial T_n)}  >  \frac{n^2}{3} \cdot \frac{1}{4n}\to \infty.\]
            	Since 
	\begin{eqnarray}\label{eq:hinv}
	h(T_n)^{-1} -1 = 2\cdot \frac{m(\I(T_n)}{m(\partial T_n))},\end{eqnarray}
	it follows that the Cheeger constant of the weighted graph $(K, m_2)$ vanishes, 
		$ h(K, m_2) = 0 .$
            	\end{example}
        
     Examples \ref{ex:comp1} and \ref{ex:comp2} illustrate the fact that the Cheeger constant $h(K,m)$ strongly depends on the weight function $m$. Below we analyse more examples of this kind. 
%
  
        

        						
        \begin{example}[The Half-Line graph with weight function $m_3$] \label{ex:line1}
        
         The Half-Line graph, denoted by $H$, comprises the vertex set $\V(H) = \n$ and edges of the form 
        $e_i = \{i-1,i\}$ for all $i \ge 1$.
            	We define a weight function $m_3$ on the $H$ via
            	$ m_3(e_i) = i^{-2} .$
		The weight function $m_3$ is summable as the series $\sum_{i\ge1} i^{-2}$ converges. 
		We show below that $h(H, m_3)=0$. 
		
		Write $T_n =  \{ n, n+1,  \ldots  \}\subset \V(H)$. 
		The boundary and interior of $T_n$ satisfy
            	\begin{align*}
            		m_3(\partial{T_n)} = n^{-2}, \quad
            		m_3\big(\I({T_n})\big) = \sum_{i = n+1}^\infty i^{-2}.
            		\end{align*}
            	Therefore, 
		\[ \frac{m_3\big(\I({T_n})\big)}{m_3(\partial{T_n)}}  = n^2 \cdot \sum_{i = n+1}^\infty i^{-2} \rightarrow \infty ,\]
		which, by (\ref{eq:hinv}), gives $h(T_n)\to 0$, implying 
       $ h(H, m_3) = 0 .$				
            	\end{example}	
%
	
        \begin{example}[The Half-Line graph with weight function $m_4$]\label{ex:line2}
            	Here we define another weight function on the Half-Line graph $H$:
            	$m_4(e_i)= r^i,$ where $ r \in (0,1). $ We show below that the Cheeger constant $h(H, m_4)>0$. 
		
            	As before, write $T_n =  \{ n, n+1, \ldots  \}$. 
            	For $n>0$, one has $m_4(\partial T_n) = r^n$ and $m_4(T_n) = r^n \cdot \frac{1+r}{1-r}$ and therefore $h(T_n)= \frac{1-r}{1+r}$ is independent of $n$. Note that inequality (\ref{eq:half}) is satisfied for any $n$ large enough. 
	We want to show that 
	\begin{eqnarray}\label{ex:line4}
	h(H, m_4)= h(T_n)= \frac{1-r}{1+r}.\end{eqnarray}
	By Lemma \ref{lem:prac} we need to consider subsets $S\subset \V(H)$ such that the induced subgraphs 
	$H_S$ are connected; in other words $S$ must be an interval, finite or infinite. 
Let $S = \{ i, \ldots, j\}$ be a finite interval. Then 
$$m_4(\partial S) = m_4(\partial T_i)+r^{j-1}, \quad m_4(S) = m_4(T_i)-m_4(T_{j+1}),$$
and therefore 
$$h(S)= \frac{m_4(\partial S)}{m_4(S)} = \frac{m_4(\partial T_i) +r^{j-1}}{m_4(T_i)-m_4(T_{j+1})} > h(T_i)
=\frac{1-r}{1+r}.$$
This proves (\ref{ex:line4}). 
\end{example}

%
%

\section{The dual Cheeger constant}   \label{sec:dualcheeger} 

In \cite{B14}, the authors introduced a new geometric constant, which they call the dual Cheeger constant. 
The dual Cheeger inequalities state that this constant controls the top of the spectrum of the Laplacian.
The construction in \cite{B14} is restricted to locally finite weighted graphs.

The purpose of this section is
to introduce the notion of a dual Cheeger constant adopted for weighted graphs with summable weight functions.


\subsection{Definition of the dual Cheeger constant} \label{subsec:def2}

For all $A,B \subset \V(G)$, denote by $\E(A,B)$ the set of all edges connecting $A$ to $B$.
The symbol $m(A,B)$ will denote  $m(\E(A,B)).$

\begin{definition} \label{def:dualiso}
For $A, B\subset \V(G)$ with $A\cap B=\emptyset$, $A\not=\emptyset\not= B$, write 
\begin{eqnarray}\label{dualch}
\overline{h}(A,B) = \frac{2 \cdot m(A,B)}{m(A) + m(B) }.
\end{eqnarray} 
The {\em dual Cheeger constant} of $(G,m)$, denoted by $\overline{h}(G,m)$, is defined as
    			\[ \overline{h}(G,m) = \sup \big\{ \overline{h}(A,B) \big\},\]
    			where the supremum is taken over all disjoint nonempty sets of vertices $A, B$ in $G$.
	\end{definition} 
Since $m(A)\ge m(A, B)$ and $m(B)\ge m(A, B)$, we see that $\overline h(A, B) \le 1$, and therefore 
$$\overline h(G, m) \le 1$$ for any weighted graph $(G, m)$. 

If the graph $G$ is bipartite and $V(G)=A\sqcup B$ is a partition of the set of vertexes such that all the 
edges 
connect $A$ to $B$, then $m(A)= m(A, B)$ and $m(B)= m(A, B)$, which implies
$\overline h(A, B)=1$, and therefore 
\begin{eqnarray}\label{bip}
\overline h(G, m)=1\end{eqnarray}
for any bipartite $(G, m)$. 

The inequality $\overline h(G, m)<c$ is equivalent to the statement that, for any pair of disjoint subsets $A, B\subset \V(G)$, one has the inequality
\begin{eqnarray}
m(A, B) \le c\cdot \frac{m(A)+m(B)}{2}.
\end{eqnarray}
In other words, the weight of the connecting edges between any pair of disjoint subsets $A$ and $B$ is at most $c$ times the average weight of $A$ and $B$. 
%

In \cite{B14}, the authors consider {locally finite} weighted graphs $(G,m)$ whose weight function is {not necessarily summable}.
Given an exhaustion $\Omega_n \uparrow \V(G)$ (a filtration of connected subsets that converges to $\V(G)$), they write
\[ \overline{h}(\Omega_n)= \sup \bigg\{ \frac{2 \cdot m(A,B)}{m(A) + m(B)} \bigg\},\]
where the supremum is taken over all disjoint nonempty subsets $A, B \subset \Omega_n$.
Hence, the authors define the dual Cheeger constant to be the following limit:
$ \overline{h}(G,m) = \lim_{n \rightarrow \infty} \overline{h}(\Omega_n). $
As the authors of \cite{B14} show, this limit exists and it is independent of the exhaustion.
Whilst Definition \ref{def:dualiso} does not involve any such limit, our dual Cheeger constant is equivalent to that in \cite{B14}; the difference lies in the underlying weighted graphs.

\subsection{Example of a non-bipartite graph with $\overline h(G, m)=1$}\label{ex:nonbip}
Consider the infinite graph $L$ shown on Figure \ref{fig:ladder1}; its vertexes are labelled by $v_0, v_1, \dots$ and $w_1, w_2,\dots$. The graph $L$ is not bipartite since it has a cycle of odd order. 
We set the weights as follows: $m(v_iw_i) =r^i$ and $m(v_iv_{i+1}) = \rho^i$, where $0<\rho<r <1$; besides, 
$m(v_0w_1)=1$. 
\begin{figure}[h]
\begin{center}
\includegraphics[scale=0.4]{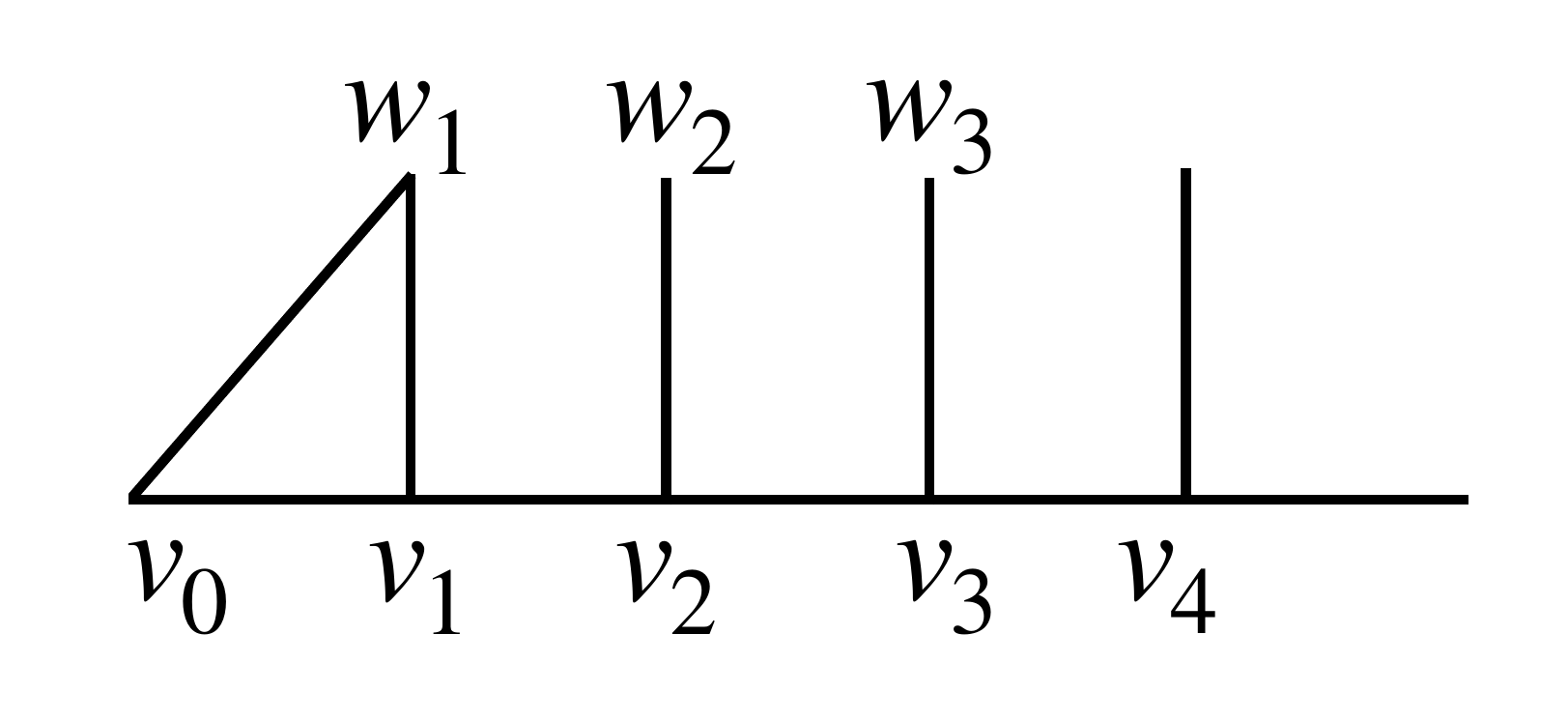}
\caption{Non-bipartite graph $L$.}\label{fig:ladder1}
\end{center}
\end{figure}
For $i> 1$ we have $m(v_i)=\rho^{i-1}+\rho^i+r^i$ and $m(w_i)=r^i$. Therefore, taking $A_i=\{v_i\}$ and $B_i=\{w_i\}$, where $i>1$, we have 
$$\overline h(A_i, B_i) = \frac{2r^i}{2r^i+\rho^{i-1}+\rho^i} \to 1.$$
Thus, we obtain $\overline h(L, m)=1$.

\subsection{The same graph $L$ with a different weight function} Consider now the following weight function $m_1$ on the graph $L$ of the previous example.  The function $m_1$ is defined similarly to $m$ with the only difference that now $\rho=r$, where $0<r<1$. In more detail, $m_1(v_iw_i)=r^i=m(v_iv_{i+1})$, and $m_1(v_0 w_1)=1$. Note that $m(w_i)=r^i$ for $i>1$, and $m(v_i)=r^{i-1}+2r^i$ for $i>0$. Besides, $m(w_1)=1+r$ and 
$m(v_0)=2$. 

Suppose that $A, B\subset \V(G)$ are disjoint sets of vertexes and for some $i\ge 1$ one has 
$v_i\in A$ and $w_i\in B$. Consider the following modifications of the pair $A, B$. 
Let $A_1=A\cup \{w_i\}$ and 
$B_1=B-\{w_i\}$. Besides, let $A_2= A$ and $B_2=B-\{w_i\}$.
Since the vertex $w_i$ is connected to the vertex $v_i$ only, by examining formula (\ref{dualch}) we easily see 
that $\overline h(A_1, B_1)< \overline h(A, B)$ and $\overline h(A_2, B_2)< \overline h(A, B)$. Thus, since we are interested in pairs $A, B$ giving maximum to the dual Cheeger constant (\ref{dualch}), we may always assume that for each vertex $v_i\in A$ the corresponding vertex $w_i$ belongs to $B$, and vice versa. 

Next, for a pair of disjoint subsets $A, B\subset \V(G)$, suppose that for some $i$ one has $v_i, v_{i+1}\in A$ and $w_i, w_{i+1}\in B$. We can modify the sets by swapping the points $v_{i+1}$ and $w_{i+1}$, i.e. 
$A'=A-\{v_{i+1}\}\cup \{w_{i+1}\}$ and $B'=B-\{w_{i+1}\}\cup \{v_{i+1}\}$. Then examining (\ref{dualch}) we see that $\overline h(A', B')> h(A, B)$. Thus, we may only consider pairs of disjoint subsets $A, B$ such that no neighbouring vertexes $v_i, v_{i+1}$ lie in the same subset $A$ or $B$. 

As another remark, consider a pair of disjoint subsets $A, B\subset \V(G)$ such that $v_i\in A$ and $w_i\in B$ and modify it as follows $A'=A\cup \{w_{i+1}\}$ and $B'= B\cup \{v_{i+1}\}$. Then we easily see from (\ref{dualch}) that $\overline h(A', B')>\overline h(A, B)$. 

Therefore, to determine $\overline h(G, m)$ we need to consider only pairs of subsets $A, B$ 
where $A$ contains all points $v_i$ with $i=k+2\ell$, where $\ell\ge 0$, and all points $w_i$ with 
$i=k+2\ell+1$, where $\ell\ge 0$; the set $B$ is defined similarly with the letters $v$ and $w$ interchanged. 
Here $k$ is a fixed integer $k\ge 1$. 
One finds that the dual Cheeger ratio 
$\overline h(A, B)=\frac{4r}{3r+1}$ 
does not depend on $k$.  

In the case $k=0$ we have to consider the slightly modified sets $A= \{v_0, v_1, w_2, v_3, \dots\}$ and $B=\{w_1, v_2, w_3, \dots\}$. Computing the dual Cheeger ratio (\ref{dualch}) gives $\overline h(A, B)=r$. 
Since $r< \frac{4r}{3r+1}$ we conclude that $\overline h(G, m)=\frac{4r}{3r+1}<1$. 

These two examples illustrate the possibility that the dual Cheeger constant can be maximal and equal 1 for one weight function and be smaller than 1 for another weight function.

\section{The Cheeger and dual Cheeger inequalities} \label{sec:ineqs} 

In this section, we we show that the Cheeger constant and the dual Cheeger constant control the spectral gap and the top of the spectrum of the Laplacian, respectively. 
In particular, we prove the Cheeger inequalties and the dual Cheeger inequalities for countable weighted graphs with summable weight function.
These inequalities give estimates on the spectral gap $\lambda_{gap}(\Delta)=\inf\{\lambda>0, \, \lambda\in \sigma(\Delta)\}$ and 
	the top of the spectrum $\lambda_{top}(\Delta)=\sup\{\lambda\in \sigma(\Delta)\}$.

\begin{theorem}[Cheeger and dual Cheeger inequalities]\label{thm:ci}
	For any weighted graph $(G, m)$ with summable weight function one has
	\begin{align}
	1 - \sqrt{ 1 - h(G,m)^2} \, \leq\,  &\lambda_{gap}(\Delta) \, \leq\,  2 \cdot h(G,m),\label{in:cheeger}\\
	2 \cdot \overline{h}(G,m) \, \leq\,  &\lambda_{top}(\Delta) \, \leq\,  1 + \sqrt{1 - \big(1 - \overline{h}(G,m)\big)^2 }.\label{in:dual}
	 \end{align}
	\end{theorem}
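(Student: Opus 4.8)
The statement splits into four inequalities: the two outer ones (the upper bound on $\lambda_{gap}$ and the lower bound on $\lambda_{top}$) are the ``easy'' directions, proved by exhibiting explicit test functions, while the two inner ones are the genuine Cheeger and dual Cheeger estimates. For $\lambda_{gap}\le 2\cdot h(G,m)$ I would, for each set $S$ with $m(S)\le m(S^\compl)$, feed the function $f = \mathbf 1_S - \frac{m(S)}{M}\mathbf 1$ (where $M=m(\V(G))$) into the variational formula (\ref{eq:char}); since $f\perp\mathbf 1$, one gets $\langle\Delta f,f\rangle = \langle\Delta\mathbf 1_S,\mathbf 1_S\rangle = m(\partial S)$ by (\ref{eq:delhelp}), while $\|f\|^2 = m(S)(1-m(S)/M)\ge m(S)/2$, giving a Rayleigh quotient $\le 2\cdot h(S)$; taking the infimum over $S$ yields the bound. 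Dually, for $\lambda_{top}\ge 2\cdot\overline h(G,m)$ I would use $f=\mathbf 1_A-\mathbf 1_B$ for disjoint nonempty $A,B$: here (\ref{eq:delhelp}) gives $\langle\Delta f,f\rangle = 4m(A,B)+(\text{nonnegative boundary terms})\ge 4m(A,B)$ and $\|f\|^2 = m(A)+m(B)$, so the Rayleigh quotient is at least $2\cdot\overline h(A,B)$, and $\lambda_{top}=\sup_f\langle\Delta f,f\rangle/\|f\|^2$ dominates the supremum over all such pairs.

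For the lower bound $\lambda_{gap}\ge 1-\sqrt{1-h(G,m)^2}$ I would run the classical co-area argument adapted to $L^2(G,m)$. Starting from an arbitrary $f\perp\mathbf 1$, translate by a median value $c$ of $f$ (chosen so that $\{f>c\}$ and $\{f<c\}$ both have measure $\le M/2$) and split $u=f-c\mathbf 1$ into its positive and negative parts $u_\pm$; since $\|u\|^2\ge\|f\|^2$, since $\langle\Delta u,u\rangle=\langle\Delta f,f\rangle$, and since $\langle\Delta u,u\rangle\ge\langle\Delta u_+,u_+\rangle+\langle\Delta u_-,u_-\rangle$, one of the two parts is a nonnegative $g$, supported on a set of measure $\le M/2$, with Rayleigh quotient $R_g\le R_f$. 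The layer-cake identity $\sum_{vw}m(vw)|g(v)^2-g(w)^2| = \int_0^\infty m\big(\partial\{g^2>t\}\big)\,dt$ together with $m(\partial S)\ge h(G,m)\cdot m(S)$ for $m(S)\le M/2$ gives $\sum_{vw}m(vw)|g(v)^2-g(w)^2|\ge h(G,m)\,\|g\|^2$. Factoring $g(v)^2-g(w)^2 = (g(v)-g(w))(g(v)+g(w))$ and applying Cauchy--Schwarz, while using $(g(v)+g(w))^2 = 2(g(v)^2+g(w)^2)-(g(v)-g(w))^2$ to evaluate the second factor exactly as $(2-R_g)\|g\|^2$, bounds the left side by $\|g\|^2\sqrt{R_g(2-R_g)}$. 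Hence $h(G,m)^2\le R_g(2-R_g)$, which rearranges to $R_g\ge 1-\sqrt{1-h(G,m)^2}$; chaining $R_f\ge R_g$ and taking the infimum over $f$ gives the claim. The sharp constant, as opposed to the weaker $h^2/2$, comes precisely from evaluating that second factor exactly rather than bounding $(g(v)+g(w))^2\le 2(g(v)^2+g(w)^2)$.

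The upper bound $\lambda_{top}\le 1+\sqrt{1-(1-\overline h(G,m))^2}$ is the subtle one. The natural reformulation is $2-\lambda_{top} = \inf_f \frac{\sum_{vw}m(vw)(f(v)+f(w))^2}{\|f\|^2}$, obtained from the identity $\langle(2\cdot\I-\Delta)f,f\rangle = \sum_{vw}m(vw)(f(v)+f(w))^2$. One is tempted to repeat the co-area argument verbatim, but the level sets of $|f|$ are insensitive to the sign of $f$, so that route only reproduces $h(G,m)$ and not $\overline h(G,m)$; capturing $\overline h$ requires the bipartite structure. My plan is therefore to pass to the \emph{bipartite double cover} $\hat G$, with vertex set $\V(G)\times\{+,-\}$ and, for each edge $vw$, two edges $(v,+)(w,-)$ and $(v,-)(w,+)$ of weight $m(vw)$. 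This $\hat G$ is again summable, and the unitary block-diagonalisation of $\hat P = \left(\begin{smallmatrix} 0 & P\\ P & 0\end{smallmatrix}\right)$ shows $\sigma(\hat\Delta) = \sigma(\Delta)\cup(2-\sigma(\Delta))$, whence $\lambda_{gap}(\hat\Delta)\le 2-\lambda_{top}(\Delta)$. The key combinatorial computation is that for $\hat S = (A\times\{+\})\cup(B\times\{-\})$ one has $\hat m(\partial\hat S) = m(A)+m(B)-2m(A,B)$, so that $h(\hat S) = 1-\overline h(A,B)$ and hence $h(\hat G,\hat m) = 1-\overline h(G,m)$. Applying the already-proven inequality (\ref{in:cheeger}) to $\hat G$ then gives $2-\lambda_{top}(\Delta)\ge\lambda_{gap}(\hat\Delta)\ge 1-\sqrt{1-h(\hat G)^2} = 1-\sqrt{1-(1-\overline h(G,m))^2}$, which is exactly the desired bound.

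The main obstacle is this last step: making the passage from $\lambda_{top}(\Delta)$ to $\lambda_{gap}(\hat\Delta)$ rigorous in infinite dimensions, using closedness of the spectrum so that $\lambda_{top}\in\sigma(\Delta)$ and hence $2-\lambda_{top}\in\sigma(\hat\Delta)$ is a genuine positive spectral value bounded below by the gap, together with the verification that the Cheeger infimum over $\hat S$ may be restricted to disjoint pairs $A,B$ subject to $\hat m(\hat S)\le\hat m(\hat S^\compl)$, so that $h(\hat G,\hat m)$ really equals $1-\overline h(G,m)$. A purely internal alternative, avoiding the double cover altogether, is a sign-sensitive co-area run directly on the form $\sum_{vw}m(vw)(f(v)+f(w))^2$, taking the simultaneous superlevel sets $\{f>t\}$ and $\{f<-t\}$ as the pair $(A,B)$ and feeding $\overline h(A,B)\le\overline h(G,m)$ into the estimate; this is more delicate to organise, which is why I would favour the double-cover reduction.
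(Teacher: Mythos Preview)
Your treatment of the two easy bounds and of the lower Cheeger inequality $\lambda_{gap}\ge 1-\sqrt{1-h(G,m)^2}$ is essentially the paper's proof: the same test functions, the same median shift $\tau$ with positive/negative parts, and the same co-area/Cauchy--Schwarz computation leading to the quadratic inequality (the paper's Lemma~\ref{lem:two1}). No issues there.

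The double-cover route for the upper bound on $\lambda_{top}$, however, does not go through. The identity you need, $h(\hat G,\hat m)=1-\overline h(G,m)$, is false in general. Taking $\hat S=C\times\{+,-\}$ for any $C$ with $m(C)\le m(\V(G))/2$ gives $\hat h(\hat S)=h_G(C)$, so $h(\hat G,\hat m)\le h(G,m)$. Hence your argument requires $h(G,m)\ge 1-\overline h(G,m)$, which is \emph{not} a theorem: for two triangles with unit edge weights joined by a single bridge of weight $\epsilon$, one computes $h=\epsilon/(6+\epsilon)$ and $\overline h=(4+\epsilon)/(6+\epsilon)$, so $h+\overline h=(4+2\epsilon)/(6+\epsilon)\to 2/3<1$ as $\epsilon\to 0$. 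For such graphs $h(\hat G,\hat m)\le h(G,m)<1-\overline h(G,m)$ strictly, and applying the Cheeger inequality on $\hat G$ yields only $\lambda_{top}\le 1+\sqrt{1-h(G,m)^2}$, which for small $\epsilon$ is essentially the trivial bound $2$, not the claimed $1+\sqrt{1-(1-\overline h)^2}\approx 1.94$. The obstacle you flagged is therefore fatal to the double-cover reduction, not merely a technicality.

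What the paper does instead is closer to your ``purely internal alternative'': rather than one global auxiliary graph, it builds for each test function $f$ a \emph{function-dependent} graph $(G_f,m_f)$ (following \cite{B14}) by splitting each vertex $v$ with $f(v)\ne 0$ into $v,v'$ and rerouting every edge $vw$ with $f(v)f(w)>0$ to $vw'$ and $v'w$. One then shows $\langle Qf,f\rangle\ge\langle\Delta f',f'\rangle_f$ for $f'=|f|$ extended by zero on the new vertices, applies the co-area estimate (Lemma~\ref{lem:two1}) to $f'$ on $G_f$, and checks that every nonempty $S\subset\supp(f')$ splits as $S_+\sqcup S_-$ (by the sign of $f$) with $h_f(S)=1-\overline h(S_+,S_-)\ge 1-\overline h(G,m)$. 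The point is that in $G_f$ there are \emph{no edges internal to} $S_+$ or $S_-$, which is precisely what fails for the global double cover when $A$ and $B$ overlap.
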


\begin{proof} For a subset $S\subset \V(G)$ satisfying $m(S) \leq m(S^\compl)$ define the function $f_S \in L^2(G,m)$
such that $f_S(v)= m(S)^{-1}$ for $v\in S$ and $f_S(v)=-m(S^\compl)^{-1}$ for $v\in S^c$. Since $f_S \perp {\bf 1}$, we may apply (\ref{eq:char}) to get 
\begin{align*}
			\lambda_{gap}(\Delta)
			&\leq \frac{\bra \Delta f_S,f_S \ket}{\bra f_S,f_S\ket} =
			m(\partial S) \cdot \big( \frac{1}{m(S)} + \frac{1}{m(S^\compl)} \big) 
			\leq 2\cdot h(S).
			\end{align*}
		Since this is true for all nonempty $S\subset \V(G)$ satisfying (\ref{eq:half}), we obtain the right inequality in (\ref{in:cheeger}).

To prove the left inequality in (\ref{in:dual}), for a pair $A, B\subset \V(G)$ of nonempty disjoint subsets define the function $f_{A,B} \in L^2(G,m)$ as follows: (a) $f_{A,B}(v)=1$ for $v\in A$, (b) $f_{A,B}(v)=-1$ for $v\in B$ and 
(c) $f_{A,B}(v)=0$ for $v\in \V(G)-(A\cup B)$. Using the characterisation of $\lambda_{top}(\Delta)$ in terms of Rayleigh quotients, we have
	\begin{align*}
		\lambda_{top}(\Delta)
		&\geq \rayleigh{f_{A,B}} 
		= 2 \cdot \overline{h}(A,B) + h(A\cup B) 
		 \geq 2 \cdot \overline{h}(A,B) .
		\end{align*}
	Since this is true for all nonempty, disjoint subsets $A,B \subset \V(G)$, the left inequality in (\ref{in:dual}) follows.
	
	To continue with the proof of Theorem \ref{thm:ci} we shall need to prepare certain tools. 
	The proof will be completed after Lemma \ref{lm16}.

\begin{lemma}[Co-area formulae] \label{lem:minm}
		For a function $f\colon \V(G)\to \real$ and for $t\in \real$ we shall denote
$P_t(f) = \{v\in \V(G); f(v)>t\}$. Then for every $f \in L^2(G,m)$ one has
		\begin{itemize}
			\item[(a)] $\displaystyle \;\; \int_0^\infty  m\big(P_t(f^2)\big) dt = \sum_{v\in \V(G)} m(v) \cdot f(v)^2, $ 
			\item[(b)] $\displaystyle \int_0^\infty m\big(\partial P_t(f^2)\big) dt = \sum_{uv \in \E(G)} m(uv) \cdot |f^2(u) - f^2(v)| .$
			\end{itemize}
		\end{lemma}

\begin{proof} The superlevel sets of $f^2$ satisfy
			$v \in P_t(f^2) \iff \mathbbm{1}_{(t,\infty)}\big(f^2(v)\big) = 1.$
			Therefore, 
			\begin{align*}
				&{}\int_0^\infty m\big(P_t(f^2)\big) dt
				= \int_0^\infty \sum_{v \in \V(G)} m(v) \cdot \mathbbm{1}_{(t,\infty)}\big(f^2(v)\big)dt \\
				&= \sum_{v\in \V(G)} m(v) \cdot \int_0^\infty \mathbbm{1}_{(t,\infty)}\big(f^2(v)\big) dt 
				=  \sum_{v\in \V(G)} m(v) \cdot  f^2(v).
				\end{align*}
			
			For any two vertices $u$ and $v$, define 
			$ I_{uv} = \Big[ \min\big\{ f^2(u) , f^2(v)\big\},\max\big\{ f^2(u), f^2(v) \big\} \Big)$. Then,
			\begin{align*}
				&\int_0^\infty m\big(\partial P_t(f^2)\big) dt 
				= \int_0^\infty \sum_{uv \in \E(G)} m(uv) \cdot \mathbbm{1}_{I_{uv}}(t) dt \\
				&= \sum_{uv \in \E(G)} m(uv) \cdot \int_0^\infty  \mathbbm{1}_{I_{uv}}(t) dt  
				= \sum_{uv \in \E(G)} m(uv) \cdot | g^2(u) -  g^2(v)|.
				\end{align*} \end{proof}
				
Consider the operator $Q=2I-\Delta\colon L^2(G, m)\to L^2(G, m)$. We have $\lambda\in \sigma(\Delta)$ if and only if $2-\lambda\in \sigma(Q)$. Therefore, $2- \lambda_{top}(\Delta)$ equals the bottom of the spectrum of $Q$, i.e. 
\begin{eqnarray}
2-\lambda_{top}(\Delta) = \inf \left\{\frac{\langle Qf, f\rangle}{\langle f, f\rangle}; f\not=0\right\}
\end{eqnarray}
A straightforward computation shows that for $f\in L^2(G, m)$, one has
\begin{eqnarray}\label{lem:zero1}
\langle Qf, f\rangle = \sum_{vw\in \E(G)}m(vw)\cdot (f(v)+f(w))^2.
\end{eqnarray}

\begin{lemma} \label{lem:two1} For any function $f\in L^2(G, m)$, $f\not\equiv 0$, one has the inequalities 
\begin{eqnarray}\label{quad}
1 - \sqrt{ 1 - h(f)^2} \leq \rayleigh{f} \leq 1 + \sqrt{ 1 - h(f)^2}.
\end{eqnarray}
	where the number $h(f)\ge 0$ is defined as the infimum of the
Cheeger ratios $h(S)=m(\partial S)m(S)^{-1}$ taken over all nonempty subsets $S\subset \{v; f(v)\ne 0\}$.
	\end{lemma}

\begin{proof}
Consider the expressions
$A = \bra Qf, f \ket  \cdot \bra\Delta f,f \ket $ and $B = \bra Qf, f \ket  \cdot \bra f,f \ket.$	
From (\ref{eq:delhelp}), (\ref{lem:zero1}) and the Cauchy-Schwarz inequality we obtain
	\begin{align*}
		 A^{1/2} 
		 &=   \Big( \sum_{vw \in \E(G)} m(vw) \cdot \big( f(v) + f(w) \big)^2\Big)^{1/2}  \cdot  \Big(\sum_{vw \in \E(G)} m(vw) \cdot \big( f(v) - f(w) \big)^2\Big)^{1/2} \\
		&\geq  \sum_{vw\in\E(G)} m(vw) \cdot |f(v)^2 - f(w)^2 | = \int_0^\infty m\big(\partial P_t(f^2) \big) dt\\
		&= \int_0^{t^*} \frac{m\big(\partial P_t(f^2)\big)}{m\big(P_t(f^2)\big)} \cdot m\big(P_t(f^2)\big )dt \\
		 &\geq h(f^2) \cdot \int_0^\infty m(P_t(f^2)) dt 
		= h(f) \cdot \bra f,f \ket.
		\end{align*}
		The two bottom lines use Lemma \ref{lem:minm}; the finite or infinite value $t^*$ is defined 
		by $ t^* = \sup \{ f^2\}; $
		it is introduced in order to avoid division by 0. 
		Thus we have $A\ge h(f)^2\cdot ||f||^4$. We also have 
		$B= \Big(2- \frac{\bra \Delta f, f\ket}{\bra f, f\ket}\Big)\cdot ||f||^4$. Dividing we obtain that 
		the quotient $\rayleigh{f}= AB^{-1}$ satisfies the inequality
		 $$AB^{-1} \ge \frac{h(f)^2}{2- AB^{-1}}.$$
		Solving this quadratic inequality for $AB^{-1}$ gives (\ref{quad}). 
			\end{proof}
		
Next, we observe that for any nonzero $g\in L^2(G, m)$ one can find a real number $\tau=\tau(g)$ such that 
\begin{eqnarray}\label{15}
m(g^{-1}(-\infty, \tau))\, \le \, m(\V(G))/2 \quad \mbox{and}\quad m(g^{-1}(\tau, \infty))\, \le \, m(\V(G))/2.
\end{eqnarray}
Indeed, it is easy to see that one can take $\tau= \sup\{t; \, m(g^{-1}(-\infty, t))< m(\V(G))/2\}$. 


Define the functions $g_+, g_-\in L^2(G, m)$ by
$$g_+(v)= \max\{g(v)-\tau(g), 0\}\quad \mbox{and}\quad g_-(v)= \max\{\tau(g) - g(v), 0\}.$$
Then 
\begin{eqnarray}\label{16}
g=g_+-g_- +\tau, \quad  \quad g_+ g_- =0,\end{eqnarray}
 and $ h(G, m)\le \min\{h(g_+), h(g_-)\}$ because of (\ref{15}).  
 If $g\perp \mathbf 1$ then $\bra g, g\ket $ can be estimates as follows:
 \begin{eqnarray}\label{17}
 \bra g, g\ket &=&\sum m(v)g(v)^2\, \le \, \sum m(v) (g(v)-\tau)^2= \sum m(v)(g_+(v)^2+g_-(v)^2)\\
 &= &
 \bra g_+, g_+\ket + \bra g_-, g_-\ket.\nonumber \end{eqnarray}
 Besides, 
 \begin{eqnarray}\label{18}
 \bra\Delta g, g\ket &=& \sum m(vw) (g(u)-g(v))^2 \nonumber\\
 &\ge& \sum m(uv)((g_+(u)-g_+(v))^2 + (g_-(u)-g_-(v))^2) =  \bra\Delta g_+, g_+\ket + \bra\Delta g_-, g_-\ket
 \end{eqnarray}
 Indeed, if the vertexes $u, v$ are such that $g(v)<\tau<g(u)$ then 
 \begin{eqnarray*}(g(u)-g(v))^2 &=& (g_+(u)-g_-(v))^2 \, \ge \, 
 g_+(u)^2 + g_-(v)^2 \\
 &=& (g_+(u)-g_+(v))^2 +(g_-(u)-g_-(v))^2.\end{eqnarray*}
 Thus, for $g\perp \mathbf 1$, using (\ref{17}) and (\ref{18}) we obtain
 \begin{eqnarray*}
 \frac{\bra \Delta g, g\ket}{\bra g, g \ket} &\ge& \min\{ \frac{\bra \Delta g_+, g_+\ket}{\bra g_+, g_+ \ket}, 
 \frac{\bra \Delta g_-, g_-\ket}{\bra g_-, g_-\ket}\}\\
 &\ge & \min\{ 1- \sqrt{1-h(g_+)^2}, \, 1- \sqrt{1-h(g_-)^2}\}\, \ge\,  1-\sqrt{1-h(G, m)^2}.
 \end{eqnarray*}
 This proves the left inequality in (\ref{in:cheeger}). 
 
 Finally, we prove the right inequality (\ref{in:dual}), i.e. the upper bound for the $\lambda_{top}$.
 We shall use the idea of  \cite{B14} and adopt their arguments to our situation. 
 
 Given a nonzero function $f\in L^2(G, m)$ we consider the auxiliary weighted graph $(G_f, m_f)$ 
 which is constructed as follows. For each vertex $v\in \V(G)$ with $f(v)\not=0$ we create an additional vertex $v'$ and the vertex set of the new graph $G_f$ equals $\V(G_f)= \V(G)\cup \{v'; \, v\in \V(G), f(v)\not =0\}.$ 
 
 Next, we describe the set of edges of $G_f$. We remove every edge $vw\in \E(G)$ with $f(v)f(w)>0$ and replace it with the edges $vw'$ and $v'w$ of $\E(G_f)$. All edges $vw$ of $G$ satisfying $f(v)f(w)\le 0$ are included into $E(G_f)$. 
 
 The weight function $m_f$ on $G_f$ is defined as follows: firstly, $m_f(v'w)=m_f(vw')=m(vw)$ and, secondly, for every edge $vw\in \E(G)$ with $f(v)f(w)\le 0$ we set $m_f(vw)=m(vw).$

Note that the weights of vertexes $v\in \V(G)$ remain unchanged: $m_f(v)=m(v)$. Besides, the weights of the new vertexes $v'$ satisfy $m_f(v')\le m(v)$. 

Consider the function $f'\in L^2(G_f, m_f)$ defined by $f'(v')=0$ and 
$f'(v)=|f(v)|$ for $v\in \V(G)$.
 
\begin{lemma}\label{lm16} One has
\begin{eqnarray}\label{21}
\frac{\bra Q f, f \ket}{\bra f, f\ket} \ge \frac{\bra \Delta f', f'\ket_f}{\bra f', f'\ket_f},
\end{eqnarray}
where $\bra \cdot, \cdot\ket_f$ denotes the scalar product in $L^2(G_f, m_f)$. 
\end{lemma}
\begin{proof}
Firstly, 
$\bra f, f\ket=\sum_{v\in \V(G)} m(v)f(v)^2 = 
\sum_{v\in \V(G_f)} m_f(v)f'(v)^2=\bra f', f'\ket_f.$ 
Next we show 
\begin{eqnarray}\label{in:rel}
\bra Q f, f\ket \ge \bra \Delta f', f'\ket_f.
\end{eqnarray}
If $vw$ is an edge in $G$ with $f(v)f(w)>0$, then 
\begin{eqnarray*}
m(vw)(f(v)+f(w))^2 &\ge& m(vw)(f(v)^2+f(w)^2) = m_f(vw')f'(v)^2 + m_f(v'w) f'(w)^2\\
&=& m_f(vw')(f'(v)-f'(w'))^2 +m_f(v'w)(f'(v')-f'(w))^2.
\end{eqnarray*}
Besides, for an edge $vw\in \E(G)$ with $f(v)f(w)\le 0$ one has 
$$m(vw)(f(v)+f(w))^2 = m_f(vw)(f'(v)-f'(w))^2.$$
Incorporating the above information into (\ref{eq:delhelp}) and (\ref{lem:zero1}) we obtain (\ref{in:rel}) and hence 
(\ref{21}). 
\end{proof}

We intend to use the left inequality in (\ref{quad}) applied to $f'$ viewed as an element of $L^2(G_f, m_f)$. 
 The number $h_f(f')$ is defined as the infimum of the ratios $h_f(S)=m_f(\partial S)m_f(S)^{-1}$, where $S$ runs over subsets of the support of $f'$. In our case the support of $f'$ lies in $\V(G)\subset \V(G_f)$ and can be represented as the disjoint union $\supp(f)_+\sqcup \supp(f)_-$, where $\supp(f)_+$ is the set of all vertexes $v\in \V(G)$ where $f$ is positive and $\supp(f)_-$ is the set of all vertexes $v\in \V(G)$ with $f(v)<0$. Thus, any subset $S\subset \supp (f)$ is the disjoint union $S=S_+\sqcup S_-$ where $S_\pm = S\cap \supp(f)_\pm$. In the graph $G_f$ there are no edges internal to $S_+$ and there are no edges internal to $S_-$. Thus, we obtain
 $$m_f(S) = m(S) = m_f(\partial_f S) + 2 m(S_+, S_-).$$
 Therefore, we see that 
 $$h_f(S) =\frac{m_f(S)- 2m(S_+, S_-)}{m_f(S)}= 1- \frac{2m(S_+, S_-)}{m(S_+)+m(S_-)}=1 -\h(S_+, S_-).$$
 Taking the infimum over $S\subset \supp f$ we obtain the inequality 
 \begin{eqnarray}\label{nearly}
 h_f(f') \ge 1- \h(G, m). 
 \end{eqnarray}
 
 Now we can obtain the desired upper bound for $\lambda_{top}=\lambda_{top}(\Delta)$. We have 
 \begin{eqnarray*}
 2-\lambda_{top} = \inf_{f\not=0} \frac{\bra Qf, f\ket}{\bra f, f\ket}\ge \inf_{f\not=0}\frac{\bra \Delta f', f'\ket_f}{\bra f', f'\ket_f} \ge 1- \sqrt{1- h_f(f')^2}
 \ge 1- \sqrt{1- (1-\h(G, m))^2}.
 \end{eqnarray*}
 This completes the proof of Theorem \ref{thm:ci}. 
\end{proof}
\begin{remark} Theorem 13.4 from the book \cite{Keller} is a corollary of the left part of the first inequality of Theorem \ref{thm:ci}. 

\end{remark} 

\section{A new combinatorial invariant}\label{kk}

In this section we introduce a new combinatorial invariant of countable graphs with summable weight functions. We show that this invariant is a measure of spectral asymmetry. We describe relations between the new invariant and the Cheeger and the dual Cheeger constants. 

\subsection{Definition} Let $G$ be a connected countable graph and let $m: \E(G)\to (0, \infty)$ be a summable weight function. 
For any vertex $v\in \V(G)$ and any subset $S\subset \V(G)$ we write
\begin{eqnarray}\label{pa}
m_S(v) = \sum_{w\in S}m(vw) \quad \mbox{and}\quad p_S(v) = \frac{m_S(v)}{m(v)}.
\end{eqnarray}
If we think of the underlying weighted graph as a Markov chain, then $p_S(v)$ is the probability that the particle starting at $v$ 
ends up in $S$ in one step. If $A\sqcup B =\V(G)$ is a partition of the vertex set with $A\not=\emptyset\not= B$, then 
$$p_A(v) +p_B(v) =1\quad \mbox{for any}\quad v\in \V(G).$$
\begin{definition}\label{def:kab}
For a partition $A\sqcup B =\V(G)$ of the vertex set we define 
\begin{eqnarray} \label{kab}
\k(A, B) = \max\{\sup_{v\in A} p_A(v), \sup_{w\in B}p_B(w)\}.\end{eqnarray}
Finally, we associate the following constant $\k(G, m)$ to the weighted graph:
\begin{eqnarray}
\k(G, m) = \inf \k(A, B);
\end{eqnarray}
the infimum is taken over all partitions $A\sqcup B =\V(G)$. 
\end{definition}

The inequality $\k(G, m)>c$ means that for any partition $A\sqcup B=\V(G)$ there exists either $v\in A$ or $w\in B$ such that one of the inequalities $p_A(v)>c$ or $p_B(w)>c$ holds. 
The constant $\k(G, m)$ is the supremum of the numbers $c$ for which this graph property
holds.

\subsection{Characterisation of bipartite graphs} 
\begin{theorem} \label{thm:bip} One has $\k(G, m)\ge 0$ for any weighted graph $(G, m)$. Moreover, $\k(G, m)=0$ if and only if the graph $G$ is bipartite. 
\end{theorem}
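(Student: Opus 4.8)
The plan is to prove the three assertions in turn: non-negativity, the implication that bipartiteness forces $\k(G,m)=0$, and its converse. Non-negativity and the first implication are both immediate. For every vertex $v$ and subset $S\subset\V(G)$ we have $0\le m_S(v)\le m(v)$, so $p_S(v)\in[0,1]$; hence $\k(A,B)\in[0,1]$ for every partition, and taking the infimum gives $\k(G,m)\ge 0$. If moreover $G$ is bipartite, I would fix a bipartition $A\sqcup B=\V(G)$ in which every edge joins $A$ to $B$. Then for any $v\in A$ no edge from $v$ lands in $A$, so $m_A(v)=0$ and $p_A(v)=0$; symmetrically $p_B(w)=0$ for all $w\in B$. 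Thus $\k(A,B)=0$, and combined with non-negativity this yields $\k(G,m)=0$.

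The substantive direction is the converse, which I would establish in contrapositive form: if $G$ is not bipartite then $\k(G,m)>0$. Since $G$ is not bipartite, it contains a cycle $C$ of odd length; let $e_1,\dots,e_{2k+1}$ be its edges and $u_1,\dots,u_{2k+1}$ its (distinct) vertices. Set $\delta=\min_i m(e_i)$ and $M=\max_j m(u_j)$; these are extrema over finite sets of positive numbers, hence $0<\delta\le M<\infty$. Now let $A\sqcup B=\V(G)$ be an arbitrary partition. Viewing the partition as a $2$-colouring of the vertices of $C$, an odd cycle admits no proper $2$-colouring, so at least one edge of $C$ is monochromatic; say its endpoints $u,u'$ both lie in the same class, which we may take to be $A$. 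Then $m_A(u)\ge m(uu')\ge\delta$, and since $u$ is a vertex of $C$ we have $m(u)\le M$, so
$$p_A(u)=\frac{m_A(u)}{m(u)}\ge\frac{\delta}{M}.$$
Because $u\in A$, this forces $\sup_{v\in A}p_A(v)\ge\delta/M$, and therefore $\k(A,B)\ge\delta/M$. As the partition was arbitrary, $\k(G,m)\ge\delta/M>0$.

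The step I expect to be the main obstacle is precisely the one the contrapositive is designed to avoid: the infimum defining $\k(G,m)$ ranges over \emph{all} partitions and need not be attained, so one cannot simply select an optimal partition and reason about it directly. The odd-cycle argument sidesteps this by producing a lower bound $\delta/M$ that is \emph{uniform} across every partition—this uniformity, rather than control of a single minimizer, is exactly what is needed to bound the infimum away from $0$. The only point requiring care is verifying that $\delta$ and $M$ are finite and strictly positive, which holds because a cycle is a finite subgraph and the weight function takes values in $(0,\infty)$; this is where the argument genuinely uses that the odd obstruction can be localized on finitely many edges of positive weight.
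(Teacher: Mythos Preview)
Your proof is correct and follows essentially the same approach as the paper: both exploit the fact that any $2$-colouring of an odd cycle $C$ produces a monochromatic edge, which forces $\k(A,B)$ to be at least a fixed positive quantity depending only on the (finitely many) weights along $C$. The paper frames the converse as a contradiction via a sequence $\k(A_n,B_n)\to 0$ and uses the per-edge quantity $a(vw)=\min\{m(vw)/m(v),\,m(vw)/m(w)\}$, whereas you argue directly by contrapositive with the cruder but equally sufficient bound $\delta/M$; the underlying idea is identical.
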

\begin{proof}
The inequality $\k(G, m)\ge 0$ is obvious from the definition. 

If $G$ is bipartite and $\V(G)=A\sqcup B$ is a partition such that every edge goes from $A$ to $B$, then
$p_A(v)=0$ for $v\in A$ and $p_B(w)=0$ for $w\in B$ implying $\k(A, B)=0$ and therefore $\k(G, m)=0$. 

Let $(G, m)$ be a summable weighted graph with $k(G, m)=0$. Let $A_n\sqcup B_n =\V(G)$ be a sequence of partitions with 
$\k(A_n, B_n)\to 0$. 
Suppose that it is not bipartite. Then $G$ admits a cycle $C$ of odd length. For each edge $vw$ of the cycle $C$ consider the quantity
\begin{eqnarray}
a(vw) = \min\left\{ \frac{m(vw)}{m(v)}, \frac{m(vw)}{m(w)}\right\} > 0
\end{eqnarray}
and choose $n$ so large that 
\begin{eqnarray}\label{in:g}
a(vw) > \k(A_n, B_n) \quad \mbox{for any edge $vw$ of $C$.}
\end{eqnarray}
Using the definition of $\k(A_n, B_n)$ we see that every edge $vw$ of $C$ connects a vertex of $A_n$ with a vertex of $B_n$.
Indeed, if $v, w\in A_n$ then $\k(A_n, B_n) \ge p_{A_n}(v) \ge \frac{m(vw)}{m(v)}=a(vw)$ contradicting (\ref{in:g}). 
This leads to contradiction with the fact that the cycle $C$ has odd length. 
\end{proof}

\begin{remark} Recall that the equality $\overline h(G, m)=1$ only partially characterises the class of bipartite graphs, see (\ref{bip}) and Example 
\ref{ex:nonbip}. In contrast, by Theorem \ref{thm:bip}, the equality $\k(G, m)=0$ gives a complete characterisation. 
\end{remark}

\begin{example}
Let $G$ be a complete graph on $n+1$ vertexes. We equip $G$ with the counting weight function, i.e. $m(vw)=1$ for every edge. 
If $A\sqcup B=\V(G)$ is a partition with $|A|\le |B|$ then 
$$\k(A, B) = \max \left\{ 
\frac{|A|-1}{n}, \frac{|B|-1}{n}
\right\} = \frac{|B|-1}{n}
$$
and we obtain 
\begin{eqnarray*}
\k(G, m) = n^{-1} \cdot \lceil \frac{n-1}{2}\rceil.
\end{eqnarray*}
Thus, $\k(G, m)= 1/2 - 1/(2n)$ for $n$ odd and $\k(G, n) = 1/2$ for $n$ even. 
\end{example}

\begin{example}
Let $C_n$ be the cycle of order $n$ with the counting weight function $m$, i.e. the weight of each edge equals 1. 
Then $\k(C_n, m)=0$ for $n$ even (since then $C_n$ is bipartite) and $\k(C_n) = 1/2$ for $n$ odd. Indeed, 
for $n$ odd, for any partition $A\sqcup B$ of the vertex set one of the sets $A$ or $B$ must contain two adjacent vertexes and therefore one of the numbers $p_A(v)$ or $p_B(w)$ is $\ge 1/2.$
\end{example}

It is obvious that $\k(G, m)\le 1$. In all examples known to us we have $\k(G, m)\le 1/2$, but we do not know if this inequality is true in general. 

\subsection{Relation with the dual Cheeger constant} Next we describe inequalities relating the new invariant $\k(G, m)$ with $\overline h(G, m)$. We shall frequently use the inequlity 
\begin{eqnarray}\label{frac}
\min \{\frac{a}{c}, \frac{b}{d}\}\, \le \, \frac{a+b}{c+d}\, \le \, \max \{\frac{a}{c}, \frac{b}{d}\},
\end{eqnarray}
where $a, b, c, d>0$. 

For a subset of vertexes $A\subset \V(G)$ we shall denote 
$$R_A= \frac{2\cdot m(A, A)}{m(A)}.$$

\begin{lemma}
For any partition $A\sqcup B=\V(G)$ of the set of vertexes of a summable weighted graph $(G, m)$ one has
\begin{eqnarray}\label{frac1}
\min\{R_A, R_B\}\le 1- \overline h(A, B) \le \max\{R_A, R_B\}\le \k(A, B).
\end{eqnarray}
\end{lemma}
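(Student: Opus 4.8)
The plan is to reduce all four quantities appearing in (\ref{frac1}) to the three numbers $a = m(A)$, $b = m(B)$ and $e = m(A,B)$, and then to recognise the middle term $1 - \overline{h}(A,B)$ as a \emph{mediant} of the two outer terms $R_A$ and $R_B$.

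First I would rewrite $R_A$ and $R_B$. Since $A \sqcup B = \V(G)$ is a partition, the boundary $\partial A$ coincides with the edge set $\E(A,B)$, so $m(\partial A) = e$. The relation (\ref{eq:weights}) then gives $2\,m(\I(A)) = m(A) - m(\partial A) = a - e$, and because $m(A,A) = m(\I(A))$ this yields $R_A = (a-e)/a$; symmetrically $R_B = (b-e)/b$. A direct expansion of (\ref{dualch}) gives $1 - \overline{h}(A,B) = (a + b - 2e)/(a+b)$.

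Second, I would observe that
$$ 1 - \overline{h}(A,B) = \frac{(a-e) + (b-e)}{a + b} $$
is exactly the mediant of the fractions $R_A = (a-e)/a$ and $R_B = (b-e)/b$. Applying the mediant inequality (\ref{frac}) with numerators $a - e,\, b - e$ and denominators $a,\, b$ delivers both $\min\{R_A, R_B\} \le 1 - \overline{h}(A,B)$ and $1 - \overline{h}(A,B) \le \max\{R_A, R_B\}$ simultaneously. The numerators $a - e = 2\,m(\I(A))$ and $b - e = 2\,m(\I(B))$ are nonnegative but may vanish (when $A$ or $B$ has no interior edges), whereas (\ref{frac}) is stated for strictly positive entries; I would note that the mediant inequality holds verbatim for nonnegative numerators with positive denominators, or dispose of these degenerate cases by hand.

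Third, for the last inequality $\max\{R_A, R_B\} \le \k(A,B)$, I would exhibit $R_A$ as a weighted average of the local probabilities $p_A(v)$. Summing $m_A(v) = \sum_{w \in A} m(vw)$ over $v \in A$ counts each interior edge of $A$ twice, so $\sum_{v \in A} m_A(v) = 2\,m(\I(A)) = a - e$, and hence
$$ R_A = \frac{\sum_{v \in A} m(v)\, p_A(v)}{\sum_{v \in A} m(v)}, $$
a convex combination of the values $p_A(v)$ with weights $m(v)/m(A)$. Such a convex combination is bounded above by $\sup_{v \in A} p_A(v)$, which by Definition \ref{def:kab}, cf. (\ref{kab}), is at most $\k(A,B)$; thus $R_A \le \k(A,B)$. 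The symmetric argument gives $R_B \le \k(A,B)$, and the maximum is bounded as claimed. The computations are entirely routine; the only points requiring care are the bookkeeping that identifies $1 - \overline{h}(A,B)$ as a mediant, the double-counting of interior edges in the weighted-average step, and the minor degeneracy when an interior edge weight vanishes.
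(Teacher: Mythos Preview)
Your proposal is correct and follows essentially the same route as the paper: both rewrite $1-\overline h(A,B)$ as $\dfrac{2m(A,A)+2m(B,B)}{m(A)+m(B)}$ and invoke the mediant inequality (\ref{frac}) for the first two inequalities, and both bound $R_A$ by $\sup_{v\in A}p_A(v)$ via the same weighted-average observation for the last one. Your explicit treatment of the degenerate case where a numerator vanishes is a welcome addition that the paper glosses over.
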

\begin{proof}
Since $m(A)= 2m(A, A)+m(A, B)$ we have 
$$1-\overline h(A, B) = 2\cdot \frac{m(A, A)+m(B, B)}{m(A)+m(B)}$$ and using (\ref{frac}) we obtain the left and the central inequalities in (\ref{frac1}). The right inequality in (\ref{frac1}) follows from 
$R_A \le \sup_{v\in A} \{p_A(v)\}$ and $R_B \le \sup_{w\in B} \{p_B(w)\},$ which are consequences of (\ref{frac}) as well.  
\end{proof}

Next we state a relationship between $\k(G, m)$ and the dual Cheeger constant. 
\begin{corollary} For any summable weighted graph $(G, m)$ one has 
\begin{eqnarray}
\overline h(G, m)+\k(G, m) \ge 1. 
\end{eqnarray}
\end{corollary}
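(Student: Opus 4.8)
The plan is to deduce the corollary directly from the rightmost inequality of the preceding Lemma. That Lemma asserts, for every partition $A\sqcup B=\V(G)$, the chain (\ref{frac1}), whose outermost terms give $1-\overline h(A,B)\le \k(A,B)$. Rearranging this single inequality yields
\[
\overline h(A,B)\ge 1-\k(A,B)\qquad\text{for every partition }A\sqcup B=\V(G).
\]
So the entire content of the corollary is already present pointwise at the level of partitions; what remains is to push this through the supremum defining $\overline h(G,m)$ and the infimum defining $\k(G,m)$.

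First I would observe that the supremum in the definition of $\overline h(G,m)$ ranges over all disjoint nonempty pairs $A,B$, hence in particular over all partitions. Therefore $\overline h(G,m)\ge \overline h(A,B)$ whenever $A\sqcup B=\V(G)$, and combining this with the displayed inequality gives $\overline h(G,m)\ge 1-\k(A,B)$ for every partition. Next, using the definition of $\k(G,m)$ as an infimum over partitions, I would select a sequence of partitions $A_n\sqcup B_n=\V(G)$ with $\k(A_n,B_n)\to \k(G,m)$. Applying the previous bound to each $n$ gives $\overline h(G,m)\ge 1-\k(A_n,B_n)$, and letting $n\to\infty$ produces $\overline h(G,m)\ge 1-\k(G,m)$, i.e.\ the desired inequality $\overline h(G,m)+\k(G,m)\ge 1$.

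I do not expect any genuine obstacle here; the only point deserving care is the direction in which the inf/sup interchange is performed. Because $\overline h(G,m)$ is a supremum and $\k(G,m)$ is an infimum, the per-partition inequality $\overline h(A,B)+\k(A,B)\ge 1$ survives the limiting process in exactly the favourable direction: it suffices to test $\overline h(G,m)$ against a $\k$-minimising sequence, and no uniformity over partitions is required. One minor bookkeeping remark is that restricting the supremum defining $\overline h(G,m)$ from arbitrary disjoint pairs to partitions only decreases it, which is the correct inequality direction for the argument.
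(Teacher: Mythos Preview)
Your proposal is correct and follows essentially the same approach as the paper, which simply says the statement follows by taking the infimum of both sides of the lemma's inequality (\ref{frac1}). You have spelled out more carefully the one subtlety the paper glosses over, namely that $\overline h(G,m)$ is a supremum over all disjoint pairs while $\k(G,m)$ is an infimum over partitions only, and verified that the inequality survives in the correct direction.
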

\begin{proof}
The statement follows by taking infimum of the both sides of (\ref{frac1}). 
\end{proof}

Below is a relation between the quantities $R_A$ and $R_B$ and the Cheeger constant: 

\begin{lemma} The Cheeger constant $h(G, m)$ of a summable weighted graph $(G, m)$ equals 
\begin{eqnarray}
1- \sup_{A, B} \min\{R_A, R_B\},
\end{eqnarray}
where the supremum is taken with respect to all nonempty $A, B\subset \V(G)$ that partition $\V(G)$. 
\end{lemma}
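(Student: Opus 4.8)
The plan is to reduce the asserted identity to the cleaner statement
\[
h(G,m) \;=\; \inf_{A\sqcup B=\V(G)} \max\{h(A),\,h(B)\},
\]
where the infimum runs over all partitions of $\V(G)$ into two nonempty parts, and then to prove this reformulation directly. The first step is to record the elementary relation between $R_A$ and the Cheeger ratio. For a partition $A\sqcup B=\V(G)$ the edges meeting $A$ split into internal edges (total weight $m(A,A)=m(\I(A))$) and boundary edges (total weight $m(A,B)=m(\partial A)$), so Equation~(\ref{eq:weights}) reads $m(A)=2\,m(A,A)+m(\partial A)$ and hence
\[
R_A=\frac{2\,m(A,A)}{m(A)}=\frac{m(A)-m(\partial A)}{m(A)}=1-h(A),
\]
and likewise $R_B=1-h(B)$. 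Since $t\mapsto 1-t$ is decreasing, $\min\{R_A,R_B\}=1-\max\{h(A),h(B)\}$, so taking the supremum over partitions converts $1-\sup_{A,B}\min\{R_A,R_B\}$ into exactly $\inf_{A,B}\max\{h(A),h(B)\}$. That is the reduction.

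The reformulated identity then follows from two inequalities, both resting on a single observation: because $m(\partial A)=m(\partial B)=m(A,B)$, the two Cheeger ratios $h(A)=m(A,B)/m(A)$ and $h(B)=m(A,B)/m(B)$ share a numerator, so the larger of them is the one attached to the lighter part. For the bound $\inf_{A,B}\max\{h(A),h(B)\}\ge h(G,m)$, I would take an arbitrary partition and assume without loss of generality that $m(A)\le m(B)$; then $\max\{h(A),h(B)\}=h(A)$, and since $A$ satisfies~(\ref{eq:half}) its Cheeger ratio is at least $h(G,m)$ by Definition~\ref{def:iso}(b), so the maximum is $\ge h(G,m)$, and this persists after taking the infimum. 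For the reverse bound, I would let $S$ be any nonempty competitor in the definition of $h(G,m)$, i.e.\ $m(S)\le m(S^\compl)$; note $S\neq\V(G)$ since $m(S)>0$, so $A=S$, $B=S^\compl$ is a genuine partition into nonempty parts, and the same observation gives $\max\{h(A),h(B)\}=h(S)$. Hence $\inf_{A,B}\max\{h(A),h(B)\}\le h(S)$ for every such $S$, and taking the infimum over $S$ yields $\inf_{A,B}\max\{h(A),h(B)\}\le h(G,m)$. Combining the two inequalities proves the reformulation, and therefore the lemma.

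There is no real analytic obstacle here; the only points requiring care are the direction of the monotonicity in passing between $R$ and $h$, and the elementary fact that the maximum of the two ratios always sits on the part of smaller weight. The borderline case $m(A)=m(B)$ is harmless, since then $h(A)=h(B)$ and both parts satisfy~(\ref{eq:half}). I would also remark explicitly that both parts of each partition are nonempty, which is what makes the competitors for $h(G,m)$ and the partitions match up exactly.
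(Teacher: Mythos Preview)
Your proof is correct and follows essentially the same route as the paper: both arguments hinge on the identity $R_A=1-h(A)$ and the reformulation $h(G,m)=\inf_{A,B}\max\{h(A),h(B)\}$. In fact you supply more detail than the paper, which simply asserts the reformulated identity without justifying it; your two-inequality argument (via the observation that the larger Cheeger ratio sits on the lighter part) is exactly what is needed to make that step rigorous.
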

\begin{proof}
We may write 
$$R_A = \frac{2\cdot m(A, A)}{m(A)} = \frac{m(A)-m(\partial A)}{m(A)} = 1- h(A)$$
and similarly $R_B = 1-h(B)$. Thus, 
$$
h(G, m) = \inf_{A, B} \max \{h(A), h(B)\} = \inf_{A, B} \max \{1- R_A, 1- R_B\} = 
1- \sup_{A, B} \min\{R_A, R_B\}.
$$
\end{proof}

\section{Measure of asymmetry of the spectrum}
	
Recall that for a pair of non-empty subsets $X$ and $Y$ of a metric space,  
their Hausdorff distance 
$d_H(X,Y)$ is defined as 
$${\displaystyle d_{\mathrm {H} }(X,Y)=\max \left\{\,\sup _{x\in X}d(x,Y),\,\sup _{y\in Y}d(X,y)\,\right\},\!}$$
Equivalently,
$$d_{\mathrm H}(X,Y)=\inf\{
\epsilon\ge 0; X\subset Y_\epsilon\,\,  \mbox{and}\, \, Y\subset X_\epsilon\}$$ 
where
${\displaystyle X_{\epsilon }=\bigcup _{x\in X}\{z\in M\,;\ d(z,x)\leq \epsilon \}}.$

In this section we consider a summable weighted graph $(G, m)$ and 
the spectrum $\sigma(\Delta)\subset [0,2]$ of its Laplacian $\Delta: L^2(G,m)\to L^2(G, m)$, see 
(\ref{def:lap}) and Lemma \ref{lem:prop}. 
The main result of this section, Theorem \ref{thm:mains}, describes the asymmetry of the spectrum $\sigma(\Delta)$ in terms of the invariant 
$\k(G, m)$ introduced in the previous section.

\begin{theorem} \label{thm:mains}
	Let $\mathcal R: [0,2]\to [0, 2]$ denote the reflection $\mathcal R(x)=2-x$. 
	The Hausdorff distance between the spectrum of the Laplacian $\sigma(\Delta)$ and its reflection 
	$\mathcal R(\sigma(\Delta))$ is at most $2 \cdot \k(G,m)$, i.e. 
	\begin{eqnarray}\label{Hausdorff}
	d_{\mathrm {H} }(\sigma(\Delta),\mathcal R(\sigma(\Delta)))\leq 2 \cdot \k(G,m) . 
	\end{eqnarray}
	\end{theorem}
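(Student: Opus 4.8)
The plan is to realise the reflected spectrum $\mathcal R(\sigma(\Delta))$ as the spectrum of the operator $Q=2I-\Delta$ already introduced in the proof of Theorem \ref{thm:ci}. Since $\lambda\in\sigma(\Delta)$ if and only if $2-\lambda\in\sigma(Q)$, we have $\mathcal R(\sigma(\Delta))=\sigma(Q)$, so the inequality to prove becomes $d_{\mathrm H}(\sigma(\Delta),\sigma(Q))\le 2\cdot\k(G,m)$. I would then fix an arbitrary partition $A\sqcup B=\V(G)$ and exhibit a single operator that is unitarily equivalent to $\Delta$ (hence has the same spectrum) yet differs from $Q$ by an operator of controllably small norm, namely at most $2\k(A,B)$.

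Concretely, associate to the partition the sign-flip involution $J\colon L^2(G,m)\to L^2(G,m)$ defined by $(Jf)(v)=f(v)$ for $v\in A$ and $(Jf)(v)=-f(v)$ for $v\in B$. Then $J$ is a self-adjoint unitary, so $\sigma(J\Delta J)=\sigma(\Delta)$. Writing $\Delta=I-P$ and $Q=I+P$ with $P$ as in (\ref{pp}), a direct computation gives $J\Delta J-Q=-(JPJ+P)$, and the transition kernel of $JPJ+P$ equals $(\varepsilon(v)\varepsilon(w)+1)\,m(vw)/m(v)$, where $\varepsilon=\pm1$ on $A$ and $B$ respectively. This vanishes whenever $v,w$ lie in different parts and equals $2\,m(vw)/m(v)$ when they lie in the same part. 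Hence $J\Delta J-Q=-2P'$, where $P'$ is the operator retaining only the within-part transitions; by reversibility $m(v)p(v,w)=m(vw)=m(w)p(w,v)$ it is self-adjoint.

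The heart of the argument is the operator-norm bound $\|P'\|\le\k(A,B)$, which I would obtain by a Schur/Cauchy--Schwarz estimate. For $f\in L^2(G,m)$, expanding $\langle P'f,f\rangle$ as a sum over ordered adjacent same-part pairs and using $m(vw)=m(wv)$ together with $m_A(v)=m(v)\,p_A(v)$ gives
\begin{equation*}
|\langle P'f,f\rangle|=\Big|\sum_{\text{same part}} m(vw)\,f(v)f(w)\Big|\le\sum_{\text{same part}} m(vw)\,f(v)^2=\sum_{v}m(v)\,p_{\mathrm{part}(v)}(v)\,f(v)^2\le\k(A,B)\,\|f\|^2.
\end{equation*}
Since $P'$ is self-adjoint this yields $\|J\Delta J-Q\|=2\|P'\|\le 2\,\k(A,B)$. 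I would then invoke the standard perturbation fact that for bounded self-adjoint operators $S,T$ one has $d_{\mathrm H}(\sigma(S),\sigma(T))\le\|S-T\|$ (proved via approximate eigenvectors and the identity $\operatorname{dist}(\lambda,\sigma(T))=\inf_{\|f\|=1}\|(T-\lambda)f\|$). Applying this with $S=J\Delta J$ and $T=Q$ gives $d_{\mathrm H}(\sigma(\Delta),\mathcal R(\sigma(\Delta)))\le 2\,\k(A,B)$, and taking the infimum over all partitions $A\sqcup B=\V(G)$ yields (\ref{Hausdorff}).

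The main obstacle is the norm estimate $\|P'\|\le\k(A,B)$: one must recognise that the within-part operator $P'$ is self-adjoint (this is precisely reversibility), so that its norm is governed by $\sup|\langle P'f,f\rangle|$, and then read off its row sums as the return probabilities $p_A(v),p_B(w)$, all bounded by $\k(A,B)$. The only other delicate point is that the perturbation inequality must be applied to the entire spectrum rather than to extremal eigenvalues alone, since for infinite graphs $\sigma(\Delta)$ need not be discrete; the approximate-eigenvector formulation handles this uniformly.
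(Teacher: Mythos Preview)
Your proof is correct and follows essentially the same route as the paper: your sign-flip involution $J$ is the paper's $T_\psi$, your within-part operator $P'$ is the paper's $P_\psi=P_A+P_B$, your identity $J\Delta J=Q-2P'$ is Lemma~\ref{lem:conj}, and your Schur-type bound $\|P'\|\le\k(A,B)$ via the quadratic form is exactly Lemma~\ref{thm:phnorm}. The only cosmetic difference is that the paper cites Kato's perturbation theorem for the Hausdorff-distance inequality, whereas you sketch it directly via approximate eigenvectors.
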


The following simple observation will be useful.

\begin{lemma}\label{lm:isometry}
For any subset $X\subset [0,2]$, the Hausdorff distance $d_{\mathrm {H} }(X,\mathcal R(X))$ equals 
$\inf\{\epsilon >0; \mathcal R(X) \subset X_\epsilon\}$.
\end{lemma}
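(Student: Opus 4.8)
The plan is to exploit the fact that the reflection $\mathcal R(x) = 2-x$ is both an isometry and an involution of $[0,2]$, and to show that under these two properties the two containment conditions built into the definition of the Hausdorff distance coincide. Recall that by definition
$$d_{\mathrm H}(X, \mathcal R(X)) = \inf\{\epsilon \geq 0 : X \subset (\mathcal R(X))_\epsilon \ \text{and}\ \mathcal R(X) \subset X_\epsilon\}.$$
I want to argue that the first containment $X \subset (\mathcal R(X))_\epsilon$ is automatically equivalent to the second, $\mathcal R(X) \subset X_\epsilon$, so that only one of them needs to be retained.

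First I would establish the identity $(\mathcal R(X))_\epsilon = \mathcal R(X_\epsilon)$ for every $\epsilon \geq 0$. This follows immediately from the fact that $\mathcal R$ preserves distances: since $|\mathcal R(a) - \mathcal R(b)| = |a-b|$ for all $a,b$, a point $z$ lies within $\epsilon$ of $\mathcal R(X)$ exactly when $\mathcal R(z)$ lies within $\epsilon$ of $X$, which is precisely the assertion that $\epsilon$-neighbourhoods commute with $\mathcal R$. Next I would use that $\mathcal R$ is an involution, $\mathcal R \circ \mathcal R = \mathrm{id}$, and a bijection of $[0,2]$. Applying $\mathcal R$ to both sides of $X \subset (\mathcal R(X))_\epsilon = \mathcal R(X_\epsilon)$ and using that a bijection respects inclusions, the left condition transforms into $\mathcal R(X) \subset \mathcal R(\mathcal R(X_\epsilon)) = X_\epsilon$, which is exactly the right condition. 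Hence the two containments are equivalent, and the pair reduces to the single condition $\mathcal R(X) \subset X_\epsilon$.

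Finally, I would observe that replacing the range $\epsilon \geq 0$ by $\epsilon > 0$ does not change the infimum: since $X_\epsilon \subset X_{\epsilon'}$ whenever $\epsilon \leq \epsilon'$, the defining property is monotone in $\epsilon$, so the infimum over $\epsilon > 0$ agrees with that over $\epsilon \geq 0$. This yields $d_{\mathrm H}(X, \mathcal R(X)) = \inf\{\epsilon > 0 : \mathcal R(X) \subset X_\epsilon\}$, as claimed. There is no serious obstacle here; the only point requiring genuine care is the clean verification of $(\mathcal R(X))_\epsilon = \mathcal R(X_\epsilon)$, which is where the isometry property is actually used.
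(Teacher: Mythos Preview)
Your proof is correct and follows essentially the same approach as the paper: both arguments use that $\mathcal R$ is an involutive isometry to show the two containments in the definition of Hausdorff distance are equivalent, via the identity $\mathcal R(X_\epsilon)=(\mathcal R(X))_\epsilon$. You are slightly more thorough, explicitly verifying this identity and addressing the harmless discrepancy between $\epsilon>0$ and $\epsilon\ge 0$, which the paper leaves implicit.
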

\begin{proof} Since $\mathcal R$ is an involution and an isometry, the relation $\mathcal R(X)\subset X_\epsilon$ implies 
$$X=\mathcal R(\mathcal R(X))\subset \mathcal R(X_\epsilon)=\mathcal R(X)_\epsilon.$$
The statement now follows from the definition of the Hausdorff distance. 
\end{proof}

The proof of Theorem \ref{thm:mains} is completed by the end of this section. 
%
%
%

Recall that the Laplacian $\Delta\colon L^2(G, m) \to L^2(G, m)$ equals $I-P$ where $I$ is the identity operator and 
$P\colon L^2(G, m)\to L^2(G, m)$ is given by formula (\ref{pp}). 

Let $\psi\colon  \V(G) \rightarrow \real $ be a function satisfying $\psi^2\equiv 1$. Consider the associated partition
of the vertex set $A\sqcup B=\V(G)$ where $A=\psi^{-1}(1)$ and  $B=\psi^{-1}(-1)$. We obtain the decomposition of 
$L^2(G, m)$ into 
the direct sum of two Hilbert spaces
\begin{eqnarray}
L^2(G, m) \, =\, L^2(A, m)\oplus L^2(B, m).
\end{eqnarray}
Here $L^2(A, m)\subset L^2(G,m)$ is the space of functions 
$f\in L^2(G, m)$ with $\supp (f)\subset A$ and similarly for $L^2(B, m)$. Denote by 
$$\pi_A, \pi_B: L^2(G, m)\to L^2(G, m)$$
the projections
$$\pi_A(f)|_A = f|_A, \quad \pi_A(f)|_B =0,$$
and similarly for $\pi_B$. Clearly, the operators $\pi_A, \pi_B$ are self-adjoint. Write $$T_\psi=\pi_A - \pi_B \quad \mbox{and}\quad P_\psi = P_A + P_B,$$ viewed as operators 
$L^2(G, m) \to L^2(G, m)$, 
where 
$$P_A= \pi_A P \pi_A: L^2(A, m)\to L^2(A, m)\quad \mbox{and}\quad  P_B= \pi_B P \pi_B: L^2(B, m)\to L^2(B, m).$$ 
The operators $P_A$ and $P_B$ are self-adjoint since they are compositions of self-adjoint operators. 

\begin{lemma} \label{lem:conj}
	One has
	$$T_\psi^{-1} \Delta T_\psi = 2 \cdot I - \Delta - 2\cdot P_\psi$$
and therefore 
	\begin{eqnarray}\label{sigma}
	 \sigma(\Delta) = \sigma(2 \cdot I - \Delta - 2\cdot P_\psi). \end{eqnarray}
	\end{lemma}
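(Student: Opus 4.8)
The plan is to verify the operator identity $T_\psi^{-1}\Delta T_\psi = 2I - \Delta - 2P_\psi$ directly, and then deduce the spectral equality as a formal consequence. Since $T_\psi = \pi_A - \pi_B$ with $\pi_A + \pi_B = I$ and $\pi_A\pi_B = \pi_B\pi_A = 0$, the operator $T_\psi$ is self-adjoint and satisfies $T_\psi^2 = \pi_A + \pi_B = I$, so $T_\psi^{-1} = T_\psi$. Thus the left-hand side is simply $T_\psi \Delta T_\psi$, a genuine conjugation, which immediately gives $\sigma(T_\psi^{-1}\Delta T_\psi) = \sigma(\Delta)$; establishing the stated operator identity then yields (\ref{sigma}) at once. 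So the real content is the algebraic identity, not the spectral claim.

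To prove the identity I would write $\Delta = I - P$ and expand $T_\psi\Delta T_\psi = T_\psi(I - P)T_\psi = T_\psi^2 - T_\psi P T_\psi = I - T_\psi P T_\psi$. The crux is therefore to show
\begin{eqnarray*}
T_\psi P T_\psi = 2P_\psi + \Delta - I = 2P_\psi - P.
\end{eqnarray*}
First I would expand $T_\psi P T_\psi = (\pi_A - \pi_B)P(\pi_A - \pi_B) = \pi_A P\pi_A - \pi_A P\pi_B - \pi_B P\pi_A + \pi_B P\pi_B$. Recognising $\pi_A P\pi_A = P_A$ and $\pi_B P\pi_B = P_B$, the first and last terms sum to $P_\psi$. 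For the mixed terms, I would use the resolution of the identity $I = \pi_A + \pi_B$ to write $P = (\pi_A + \pi_B)P(\pi_A + \pi_B) = \pi_A P\pi_A + \pi_A P\pi_B + \pi_B P\pi_A + \pi_B P\pi_B = P_\psi + (\pi_A P\pi_B + \pi_B P\pi_A)$. Hence the cross terms satisfy $\pi_A P\pi_B + \pi_B P\pi_A = P - P_\psi$.

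Substituting back gives $T_\psi P T_\psi = P_\psi - (P - P_\psi) = 2P_\psi - P$, exactly what is required. Therefore $T_\psi\Delta T_\psi = I - (2P_\psi - P) = I - 2P_\psi + P = 2I - (I - P) - 2P_\psi = 2I - \Delta - 2P_\psi$, which is the claimed identity. The main obstacle, such as it is, is purely bookkeeping: one must confirm that $P$ is bounded (Lemma \ref{lem:prop}) so that all the compositions are well-defined bounded operators on $L^2(G,m)$ and the manipulation $P = (\pi_A + \pi_B)P(\pi_A + \pi_B)$ is legitimate, and that $T_\psi$ restricts correctly on the direct-sum decomposition. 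Once the identity holds between bounded self-adjoint operators, the equality of spectra (\ref{sigma}) follows because conjugation by the invertible isometry $T_\psi$ preserves the spectrum.
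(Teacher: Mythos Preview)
Your proof is correct and follows essentially the same route as the paper: both use $T_\psi^{-1}=T_\psi$, write $\Delta=I-P$, expand $(\pi_A-\pi_B)P(\pi_A-\pi_B)$ into the four blocks, and then regroup using $P=(\pi_A+\pi_B)P(\pi_A+\pi_B)$ to obtain $T_\psi P T_\psi = 2P_\psi - P$. The spectral conclusion is drawn identically from invariance of the spectrum under conjugation.
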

	\begin{proof} Since $T_\psi^{-1} = T_\psi$ one has
	\begin{eqnarray*}
T_\psi^{-1} \Delta T_\psi &=& (\pi_A-\pi_B)(I-P)(\pi_A - \pi_B)\\ &=& I - (\pi_A-\pi_B)P(\pi_A-\pi_B)\\
&=& I +\pi_A P\pi_B +\pi_B P\pi_A - P_A - P_B\\
&=& I + P - 2 \cdot (P_A+P_B) \\ 
&=& 2\cdot I - \Delta -2 \cdot P_\psi.
	\end{eqnarray*}
	This proves the first claim of the Lemma.
	The second claim follows as well using the fact that the spectrum is invariant under conjugation.
	\end{proof}
	
	Next we show that the norm of the operator $P_\psi$ can be estimated using the combinatorial invariant 
	$\k(A, B)$, which is introduced in \S \ref{kk}.

\begin{lemma} \label{thm:phnorm} Let $(G, m)$ be a summable weighted graph. Let $\psi \colon \V(G) \rightarrow \real$ be a function satisfying $\psi^2\equiv 1$ and let  
$A\sqcup B=\V(G)$ be the associated partition of the vertex set $\V(G)$, where $\psi |_A \equiv 1$ and $\psi |_B \equiv -1$.
 Then one has
 \begin{eqnarray}\label{35}
   || P_\psi || \le   \k(A,B). \end{eqnarray} 	
\end{lemma}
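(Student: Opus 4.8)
The plan is to exploit that $P_\psi = P_A + P_B$ is block-diagonal with respect to the orthogonal decomposition $L^2(G,m) = L^2(A,m) \oplus L^2(B,m)$. Indeed, $P_A = \pi_A P \pi_A$ annihilates $L^2(B,m)$ and has range in $L^2(A,m)$, while $P_B$ does the reverse, so for $f = f_A + f_B$ one has $P_\psi f = P_A f_A + P_B f_B$ with the two summands in orthogonal subspaces. Hence $\|P_\psi\|^2 = \|P_A f_A\|^2 + \|P_B f_B\|^2$ over $\|f_A\|^2 + \|f_B\|^2$ is maximised separately on each block, giving $\|P_\psi\| = \max\{\|P_A\|, \|P_B\|\}$. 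Since $\k(A,B) = \max\{\sup_{v\in A} p_A(v),\, \sup_{w\in B} p_B(w)\}$, it suffices to prove $\|P_A\| \le \sup_{v\in A} p_A(v)$ and, symmetrically, $\|P_B\| \le \sup_{w\in B} p_B(w)$.

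To bound $\|P_A\|$, I would use that $P_A$ is self-adjoint (it was already observed to be a composition of self-adjoint operators), so $\|P_A\| = \sup_{f\ne 0} |\bra P_A f, f\ket| / \bra f, f\ket$. Unwinding the definition (\ref{pp}), for $f$ supported on $A$ one computes
\[
\bra P_A f, f\ket = \sum_{v\in A} m(v)\cdot \Big( m(v)^{-1}\sum_{w\in A} m(vw) f(w)\Big) f(v) = \sum_{v,w\in A} m(vw)\, f(v)\, f(w),
\]
a quadratic form running over ordered pairs of vertices in $A$ (with $m(vw)=0$ when $vw\notin \E(G)$).

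The crucial trick is the arithmetic--geometric symmetrisation $|f(v)f(w)| \le \tfrac12\big(f(v)^2 + f(w)^2\big)$. Applying it and using that $m(vw)$ is symmetric in $v,w$ collapses the cross terms:
\[
|\bra P_A f, f\ket| \le \tfrac12 \sum_{v,w\in A} m(vw)\big(f(v)^2 + f(w)^2\big) = \sum_{v\in A} f(v)^2 \sum_{w\in A} m(vw) = \sum_{v\in A} f(v)^2\, m_A(v).
\]
Since $m_A(v) = p_A(v)\, m(v) \le \big(\sup_{u\in A} p_A(u)\big)\, m(v)$, the right-hand side is at most $\big(\sup_{u\in A} p_A(u)\big)\cdot \bra f, f\ket$, yielding $\|P_A\| \le \sup_{v\in A} p_A(v)$. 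The identical argument on $B$ gives $\|P_B\| \le \sup_{w\in B} p_B(w)$, and combining with the block-diagonal reduction proves (\ref{35}).

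I do not expect a serious obstacle here; the two points requiring care are the block-diagonal reduction (which must be stated in terms of the orthogonal direct sum, not just algebraically) and the convergence of the various series. The latter is controlled exactly as in (\ref{ineq111}): the bounding sum $\sum_{v\in A} m_A(v) f(v)^2 \le (\sup p_A)\|f\|^2$ is finite for $f\in L^2(G,m)$, so all manipulations (in particular interchanging the order of summation and the symmetrisation step) are legitimate for absolutely convergent series. The genuinely load-bearing step is the symmetrisation, which converts the off-diagonal quadratic form into the diagonal weighted-degree sum that $p_A$ measures.
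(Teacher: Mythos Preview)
Your proposal is correct and follows essentially the same approach as the paper: reduce to the blocks $P_A$, $P_B$ via the orthogonal decomposition, use self-adjointness to pass to the numerical range, expand $\langle P_A f, f\rangle$ as $\sum_{v,w\in A} m(vw)f(v)f(w)$, and apply the symmetrisation $|f(v)f(w)|\le \tfrac12(f(v)^2+f(w)^2)$ to obtain the bound by $\sup_{v\in A} p_A(v)\cdot \|f\|^2$. Your additional remarks on the block-diagonal reduction and absolute convergence are accurate and make the argument slightly more explicit than the paper's version.
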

\begin{proof} Since $P_\psi$ is self-adjoint, its norm $||P_\psi ||$ equals the supremum of 
$$\frac{|\bra P_\psi f, f\ket|}{\bra f, f\ket}$$
taken over all nonzero $f\in L^2(G, m)$, see \cite{Kr}, \S 9.2. By construction, $P_\psi$ is the direct sum of the operators $P_A$ and $P_B$ and we show below that 
\begin{eqnarray}\label{36}
||P_A|| \le \sup_{v\in A} \{ p_A(v)\}\quad \mbox{and} \quad ||P_B|| \le \sup_{v\in B} \{ p_B(v)\},
\end{eqnarray}
where for $v\in A$ the quantity $p_A(v)$ is defined as $m(v)^{-1} \sum_{w\in A}m(vw)$ and similarly for $p_B(v)$. Clearly, due to the definition of $\k(A,B)$, (\ref{36}) implies (\ref{35}) and thus we only need to prove the inequality (\ref{36}). 

For $f\in L^2(A, m)$ we have 
$(P_A f)(v) = m(v)^{-1} \cdot \sum_{w\in A} m(vw)f(w)$ and 
therefore 
\begin{eqnarray*}
\bra P_A f, f\ket &=& \sum_{v, w\in A} m(vw)f(v)f(w) \\
&\le& 1/2 \cdot \sum_{v, w\in A} m(vw)[f(v)^2 +f(w)^2]
= \sum_{v, w\in A} m(vw) f(v)^2 \\
&=& \sum_{v\in A} m(v) \cdot \left[ \sum_{w\in A} \frac{m(vw)}{m(v)}    \right]f(v)^2
= \sum_{v\in A} m(v) \cdot p_A(v) \cdot f(v)^2\\
&\le & \sup_{v\in A} \, \{ p_A(v)\}\cdot ||f||^2. 
\end{eqnarray*}
This gives the left inequality (\ref{36}); the right one follows similarly. 
\end{proof}

Lemma \ref{thm:phnorm} implies: 
\begin{corollary}\label{inf}
One has  $$\inf_\psi \big\{ ||P_\psi||  \big\}\leq \k(G,m), $$
	where the infimum is taken over all functions $\psi\colon \V(G) \rightarrow \real$ satisfying
	$\psi^2\equiv 1$. 

\end{corollary}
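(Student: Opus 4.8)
The plan is to obtain this as an immediate consequence of Lemma \ref{thm:phnorm} by passing to an infimum. First I would record the elementary correspondence between functions $\psi\colon \V(G)\to\real$ satisfying $\psi^2\equiv 1$ and ordered partitions of the vertex set: given such a $\psi$, one sets $A=\psi^{-1}(1)$ and $B=\psi^{-1}(-1)$, and conversely each partition $A\sqcup B=\V(G)$ determines a unique $\psi$ with $\psi|_A\equiv 1$ and $\psi|_B\equiv -1$. Under this correspondence Lemma \ref{thm:phnorm} reads $\|P_\psi\|\le\k(A,B)$ for every admissible $\psi$, so the only remaining task is to compare the infimum over $\psi$ on the left with the infimum defining $\k(G,m)$ on the right.

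Next I would carry out the infimum step. Fixing $\epsilon>0$, the definition $\k(G,m)=\inf\k(A,B)$, where the infimum runs over proper partitions $A\sqcup B=\V(G)$ with $A\ne\emptyset\ne B$, lets me select such a partition with $\k(A,B)\le\k(G,m)+\epsilon$. Its associated function $\psi$ is non-constant and, by Lemma \ref{thm:phnorm}, satisfies $\|P_\psi\|\le\k(A,B)\le\k(G,m)+\epsilon$. Since $\epsilon>0$ was arbitrary, this yields $\inf_\psi\|P_\psi\|\le\k(G,m)$, which is exactly the asserted inequality.

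The one bookkeeping point to address is the degenerate case in which $\psi$ is constant, so that one of $A$, $B$ is empty; such $\psi$ are allowed in the infimum of the corollary but are excluded from the infimum defining $\k(G,m)$. This causes no trouble, because enlarging the index set of an infimum can only lower its value, so admitting constant $\psi$ preserves the bound $\inf_\psi\|P_\psi\|\le\k(G,m)$. Accordingly I expect no substantive obstacle here: all of the analytic work resides in Lemma \ref{thm:phnorm} (the Cauchy--Schwarz estimate controlling $\|P_A\|$ and $\|P_B\|$ by $\sup_{v}p_A(v)$ and $\sup_w p_B(w)$), and the corollary itself is simply the passage to the infimum over partitions.
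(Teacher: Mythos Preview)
Your proposal is correct and follows essentially the same approach as the paper: both derive the corollary directly from Lemma \ref{thm:phnorm} by passing to the infimum over partitions. The paper simply states that the corollary is implied by the lemma without spelling out the $\epsilon$-argument or the remark about constant $\psi$, so your version is a more detailed rendering of the same one-line deduction.
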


\begin{proof}[Proof of Theorem \ref{thm:mains}]   Using (\ref{sigma}) together with an obvious equality 
$\sigma(2I-\Delta))=\sigma(\mathcal R(\Delta)) = \mathcal R(\sigma(\Delta))$ 
and Theorem 4.10 from \cite{Kato}  we have
\begin{eqnarray*}
d_{\mathrm H}(\sigma(\Delta), \mathcal R(\sigma(\Delta))) 
= d_{\mathrm H}(\sigma(2I - \Delta - 2\cdot P_\psi), \sigma(2I-\Delta))\le 2\cdot ||P_\psi||.
\end{eqnarray*}
The LHS of this inequality is independent of $\psi$. Taking infimum with respect to $\psi$ and 
applying  Corollary \ref{inf} we arrive at (\ref{Hausdorff}). 
\end{proof}

\section{The spectrum of the infinite complete graph}\label{sec8}

We start with the following general remark about summable weighted graphs. 

\begin{lemma} Let $(G, m)$ be a summable weight graph. Consider the random walk operator
$P\colon L^2(G, m)\to L^2(G, m)$, where $$(Pf)(v) = \sum_w \frac{m(vw)}{m(v)}\cdot f(w).$$ 
Then $P$ is a Hilbert-Schmidt operator if and only if 
\begin{eqnarray}\label{39}
\sum_{v, w} \frac{m(vw)^2}{m(v)m(w)}< \infty. 
\end{eqnarray}

\end{lemma}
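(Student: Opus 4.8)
The plan is to recall the definition of a Hilbert–Schmidt operator and compute the Hilbert–Schmidt norm of $P$ with respect to the orthonormal basis naturally associated to the Hilbert space $L^2(G,m)$. Recall that an operator $T$ on a separable Hilbert space is Hilbert–Schmidt precisely when $\sum_i \|Te_i\|^2 < \infty$ for some (equivalently, any) orthonormal basis $\{e_i\}$, and this sum equals $\sum_{i,j} |\langle Te_i, e_j\rangle|^2$. So the entire proof reduces to exhibiting an orthonormal basis and computing these matrix coefficients.

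\smallskip

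First I would introduce the orthonormal basis adapted to the measure $m$. For each vertex $v\in\V(G)$ define $\delta_v\colon \V(G)\to\R$ to be the indicator function of $v$, and set $e_v = m(v)^{-1/2}\,\delta_v$. Then $\langle e_v, e_w\rangle = m(v)^{-1/2}m(w)^{-1/2}\sum_u m(u)\delta_v(u)\delta_w(u) = \delta_{vw}$, so $\{e_v\}_{v\in\V(G)}$ is an orthonormal basis of $L^2(G,m)$. Next I would compute the matrix coefficients $\langle P e_w, e_v\rangle$. Using the formula $(Pf)(v) = \sum_u \frac{m(vu)}{m(v)} f(u)$ applied to $f = e_w = m(w)^{-1/2}\delta_w$, one finds $(Pe_w)(v) = m(w)^{-1/2}\cdot \frac{m(vw)}{m(v)}$, and hence
\[
\langle P e_w, e_v\rangle = \sum_u m(u)\,(Pe_w)(u)\,e_v(u) = m(v)\cdot m(w)^{-1/2}\frac{m(vw)}{m(v)}\cdot m(v)^{-1/2} = \frac{m(vw)}{\sqrt{m(v)m(w)}}.
\]

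\smallskip

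The Hilbert–Schmidt norm is then
\[
\|P\|_{HS}^2 = \sum_{v,w}\, \big|\langle P e_w, e_v\rangle\big|^2 = \sum_{v,w}\frac{m(vw)^2}{m(v)m(w)},
\]
so $P$ is Hilbert–Schmidt if and only if this sum is finite, which is exactly condition (\ref{39}). A small point worth noting is that $m(vw)=0$ whenever $vw$ is not an edge of $G$, so the double sum effectively ranges over ordered pairs of adjacent vertices (each unordered edge contributing twice), and the diagonal terms $v=w$ vanish since the graph has no loops; this does not affect the argument but clarifies that the expression matches the intended interpretation.

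\smallskip

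I do not anticipate a serious obstacle here: the statement is essentially the definition of the Hilbert–Schmidt property unwound in the correct basis. The only point requiring a trace of care is the choice of the $m$-normalised basis $\{e_v\}$ rather than the naive indicators $\{\delta_v\}$ — using the wrong normalisation would introduce spurious factors of $m(v)$ and corrupt the final expression. Since $P$ has already been shown bounded in Lemma \ref{lem:prop}, and Hilbert–Schmidt operators form a two-sided ideal, there are no convergence subtleties beyond the absolute convergence of the nonnegative double sum, which justifies the interchange of summation order freely throughout.
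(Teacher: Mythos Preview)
Your proof is correct and follows essentially the same approach as the paper: introduce the $m$-normalised orthonormal basis $e_v = m(v)^{-1/2}\delta_v$, compute the matrix coefficients $\langle Pe_w,e_v\rangle = m(vw)/\sqrt{m(v)m(w)}$, and invoke the standard Hilbert--Schmidt criterion. Your write-up is in fact slightly more detailed than the paper's, which simply states the matrix coefficients and appeals to the well-known criterion without spelling out the intermediate computation.
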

\begin{proof}
Let $g_v\in L^2(G, m)$ be the function taking the value $m(v)^{-1/2}$ at point $v$ and vanishing at all other points. 
Clearly, the system 
$$\{g_v; v\in \V(G)\}$$ is an orthonormal basis. The matrix coefficients of the operator $P$ in this basis are 
$$\bra Pg_w, g_v\ket = \sum_{a\in \V(G)}  m(a) (Pg_w)(a) g_v(a) = \frac{m(vw)}{\sqrt{m(v)m(w)}}.$$
Thus, condition (\ref{39}) is equivalent to the well-known criterion for Hilbert-Schmidt operators. 
\end{proof}

\begin{example}
Consider again the weighted graph $(G,m)$ of Example \ref{ex:comp1}, i.e. $G$ is the infinite complete graph with vertex set $\Bbb N$ and with the weights 
$$m(ij)=p_i p_j\quad \mbox{where }\quad 
p_1\ge p_2\ge p_3\ge \dots , \quad \sum_{i\ge 1} p_i=1.$$ 
The weights of vertexes are $m(i)=p_i q_i$, where $q_i=1-p_i$. 
Condition (\ref{39}) requires in this case convergence of the series
$$\sum_{i\not= j} \frac{(p_ip_j)^2}{p_iq_i p_jq_j} \le \sum_{i, j} \frac{p_ip_j}{q_iq_j} = \left(\sum_{i}p_iq_i^{-1}\right)^2 < q_1^{-2}.$$
We see that in this example the random walk operator $P$ is Hilbert-Schmidt and hence compact. 
\end{example}

\begin{corollary}
 The spectrum of the Laplacian $\Delta = I-P: L^2(G, m)\to L^2(G, m)$ of the infinite complete graph $(G, m)$ 
consists of an infinite sequence of eigenvalues converging to $1\in [0,2]$ and the point $1$ is the only point of the spectrum which is not an eigenvalue. 
\end{corollary}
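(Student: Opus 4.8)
The plan is to deduce the corollary directly from the preceding lemma together with the spectral theory of compact self-adjoint operators. The immediately preceding Example has already verified that for the weighted graph $(G,m)$ of Example \ref{ex:comp1} the Hilbert--Schmidt condition (\ref{39}) holds, so the lemma tells us that the random walk operator $P$ is Hilbert--Schmidt, and in particular compact. Since $P$ is also self-adjoint (as noted in the proof of Lemma \ref{lem:prop}), the spectral theorem for compact self-adjoint operators applies to $P$ directly, and I would invoke it as the main engine of the argument.

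First I would recall that a compact self-adjoint operator $P$ on an infinite-dimensional Hilbert space has a spectrum consisting of a sequence of real eigenvalues (each of finite multiplicity) whose only possible accumulation point is $0$, and that $0$ itself always lies in $\sigma(P)$ but need \emph{not} be an eigenvalue. Translating through $\Delta = I - P$, the eigenvalues $\mu_n$ of $P$ become eigenvalues $1-\mu_n$ of $\Delta$, and $\mu_n \to 0$ becomes $1-\mu_n \to 1$. This gives the stated sequence of eigenvalues of $\Delta$ converging to $1$, and it identifies $1$ as the image under $x\mapsto 1-x$ of the point $0\in\sigma(P)$.

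The step requiring the most care is showing that $1$ is genuinely in $\sigma(\Delta)$ but is \emph{not} an eigenvalue of $\Delta$, equivalently that $0\in\sigma(P)$ but $0$ is not an eigenvalue of $P$. Membership $0\in\sigma(P)$ is automatic for a compact operator on an infinite-dimensional space (the spectrum is infinite and $0$ is its only accumulation point, hence $0\in\sigma(P)$). To rule out $0$ as an eigenvalue one must show $P$ is injective, i.e. that $Pf=0$ forces $f=0$. I expect this to be the main obstacle, and I would handle it by direct computation using the explicit weights $m(ij)=p_ip_j$: writing out $(Pf)(v)=0$ for each vertex $v$ gives a system relating the value $f(v)$ to the weighted sum $\sum_{w\neq v} p_w f(w)$, and exploiting the rank-one-type structure of the weight $p_ip_j$ one finds that $(Pf)(i)=0$ for all $i$ pins down $f$ to be $0$. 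Concretely, since $m(ij)=p_ip_j$ is a product, the off-diagonal part of $P$ has a rank-one character that makes the kernel computation tractable.

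Finally, I would assemble these pieces: the nonzero eigenvalues of $P$ give the eigenvalues $1-\mu_n$ of $\Delta$ accumulating at $1$; injectivity of $P$ shows $1$ is not an eigenvalue of $\Delta$; and compactness guarantees that apart from the accumulation point $1$ every spectral point is an isolated eigenvalue of finite multiplicity, so $1$ is the unique non-eigenvalue point of $\sigma(\Delta)$. This yields exactly the description in the corollary. The one subtlety worth flagging is that Lemma \ref{lem:prop} already constrains $\sigma(\Delta)\subset[0,2]$ and guarantees $0\in\sigma(\Delta)$ via constant functions; consistency with the compact picture requires that $0$ appear as one of the eigenvalues $1-\mu_n$ (namely with $\mu_n=1$), which I would note is realised by the constant eigenfunction, so no conflict arises.
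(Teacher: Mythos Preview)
Your proposal is correct and is essentially the argument the paper intends: the Corollary is placed immediately after the Example verifying the Hilbert--Schmidt condition~(\ref{39}) and is given no separate proof, so it is meant to follow at once from compactness and self-adjointness of $P$ via the spectral theorem for compact self-adjoint operators. You are in fact more careful than the paper at this stage. The one point the spectral theorem does \emph{not} hand you for free --- that $0$ is not an eigenvalue of $P$, equivalently that $1$ is not an eigenvalue of $\Delta$ --- you correctly isolate and propose to settle by a direct kernel computation exploiting the product form $m(ij)=p_ip_j$; the paper instead leaves this implicit here and only pins it down later through the explicit eigenvalue analysis culminating in Lemma~\ref{lm:spectrump}. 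Your route (showing $Pf=0$ forces $p_if(i)$ to be constant in $i$, which is incompatible with $f\in L^2(G,m)$ unless $f=0$) is a clean self-contained way to close the gap at the point where the Corollary is stated.
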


Below we analyse further the infinite complete graph and give a more detailed information about its spectrum. 

The random walk operator $P$ is given by $$(Pf)(i) = \sum_{j\not=i}\frac{p_j}{q_j} f(j)=\sum_{j}\frac{p_j}{q_j} f(j)- 
\frac{p_i}{q_i}f(i).$$
Consider the eigenvalue equation $Pf_\lambda=\lambda f_\lambda$, i.e. 
\begin{eqnarray}\label{equa}
\sum_j \frac{p_j}{q_j} f_\lambda(j) = (p_iq_i^{-1} +\lambda)f_\lambda(i)
\end{eqnarray}
for any $i\in \Bbb N$. Therefore, for any $i\ge 1$ one has
$$(p_iq_i^{-1}+\lambda)f_\lambda(i) = (p_1q_1^{-1}+\lambda)f_\lambda(1). $$
Without loss of generality we may assume that $(p_1q_1^{-1}+\lambda)f_\lambda(1)=1$. 
We obtain an infinite increasing sequence of numbers  
$$\alpha_1\le \alpha_2 \le \dots < 0, \quad \alpha_i\to 0, \quad \mbox{where}\quad \alpha_i = -p_iq_i^{-1},$$
and the eigenfunction $f_\lambda$ satisfies 
$$f_\lambda(i) = (\lambda- \alpha_i)^{-1}, \quad i\ge 1.$$
The equation (\ref{equa}) becomes 
\begin{eqnarray}\label{eigen1}
\sum_{j=1}^\infty \frac{\alpha_j}{\alpha_j - \lambda} = 1 
\end{eqnarray}
and the condition $f_\lambda\in L^2(G, m)$ can be expressed as 
$\sum_{i=1}^{\infty}p_iq_i(\alpha_j - \lambda)^{-2} \ < \ \infty.$
Since $q_1\le q_i<1$, the latter condition is equivalent to  
\begin{eqnarray}\label{eigen2}
\sum_{j=1}^{\infty}p_j(\lambda -\alpha_j)^{-2} \ < \ \infty.
\end{eqnarray}
Corollary \ref{cor:sum} summarises the above arguments.
\begin{corollary}\label{cor:sum} A number $\lambda\in [0,2]$ is an eigenvalue of the random walk operator $P$ for the infinite complete graph if and only if the equation (\ref{eigen1}) and inequality (\ref{eigen2}) are satisfied. 
\end{corollary}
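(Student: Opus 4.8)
The plan is to verify Corollary \ref{cor:sum} by assembling the computations carried out in the paragraphs immediately preceding it into a clean biconditional. The target claim is that $\lambda\in[0,2]$ is an eigenvalue of $P$ precisely when both \eqref{eigen1} and \eqref{eigen2} hold, so I would organize the argument as two implications, taking care to treat the degenerate cases where $\lambda$ coincides with one of the $\alpha_j$.

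First I would establish the forward direction. Suppose $\lambda$ is an eigenvalue with eigenfunction $f_\lambda\in L^2(G,m)$, $f_\lambda\neq 0$. The eigenvalue equation $Pf_\lambda=\lambda f_\lambda$ is exactly \eqref{equa}, and from it I would derive the key relation that the quantity $(p_iq_i^{-1}+\lambda)f_\lambda(i)=(\alpha_i-\lambda)\cdot(-f_\lambda(i))$ is independent of $i$, because the left-hand side of \eqref{equa}, namely $\sum_j \frac{p_j}{q_j}f_\lambda(j)$, does not depend on $i$. Writing this common value and normalizing it to equal a fixed constant (the paper sets $(p_1q_1^{-1}+\lambda)f_\lambda(1)=1$) forces $f_\lambda(i)=(\lambda-\alpha_i)^{-1}$ for every $i$. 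Substituting this explicit form back into \eqref{equa} produces \eqref{eigen1}, and the requirement $f_\lambda\in L^2(G,m)$, i.e. $\sum_i m(i)f_\lambda(i)^2=\sum_i p_iq_i(\lambda-\alpha_i)^{-2}<\infty$, reduces via the bound $q_1\le q_i<1$ to the summability condition \eqref{eigen2}. I would note here that the normalization constant is nonzero, so no eigenfunction can satisfy $p_iq_i^{-1}+\lambda=0$ for any $i$, which guarantees $\lambda\neq\alpha_i$ and makes the reciprocals $(\lambda-\alpha_i)^{-1}$ well-defined.

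Conversely, assuming \eqref{eigen1} and \eqref{eigen2} hold, I would simply define $f_\lambda(i)=(\lambda-\alpha_i)^{-1}$. Condition \eqref{eigen2} guarantees $f_\lambda\in L^2(G,m)$ (again using $q_1\le q_i<1$ to pass between $\sum p_iq_i(\lambda-\alpha_i)^{-2}$ and $\sum p_i(\lambda-\alpha_i)^{-2}$), so $f_\lambda$ is a genuine nonzero element of the Hilbert space. A direct computation, using \eqref{eigen1} to evaluate the constant sum $\sum_j \frac{\alpha_j}{\alpha_j-\lambda}=1$ and hence $\sum_j\frac{p_j}{q_j}f_\lambda(j)$, then verifies that \eqref{equa} holds for every $i$, which is precisely $Pf_\lambda=\lambda f_\lambda$. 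Thus $\lambda$ is an eigenvalue.

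I do not expect a serious obstacle here, since the Corollary is essentially a bookkeeping summary of the displayed derivation; the only point requiring genuine care is the handling of the degenerate value $\lambda=\alpha_i$. The argument must confirm that such $\lambda$ can never be an eigenvalue (so that the explicit formula $f_\lambda(i)=(\lambda-\alpha_i)^{-1}$ is always legitimate on both sides of the biconditional), which follows from the observation that the common normalized value $(p_iq_i^{-1}+\lambda)f_\lambda(i)$ would then vanish, forcing $f_\lambda\equiv 0$. A secondary subtlety is justifying the interchange of summation implicit in manipulating \eqref{equa}, but this is covered by the absolute convergence established through inequality \eqref{ineq111} and the Cauchy--Schwarz estimate already available for functions in $L^2(G,m)$.
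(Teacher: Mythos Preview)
Your proposal is correct and follows exactly the approach of the paper: the paper's own proof consists of the single sentence ``Corollary \ref{cor:sum} summarises the above arguments,'' and you have simply organized that preceding derivation into an explicit biconditional. Your added care with the degenerate case $\lambda=\alpha_i$ and the converse direction goes slightly beyond what the paper spells out, but the underlying computation is identical.
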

We shall see later that the condition (\ref{eigen2}) is automatically satistfied. 

As an example, consider $\lambda=1$. One has $\frac{\alpha_j}{\alpha_j - 1} =  p_j$ and hence equation (\ref{eigen1}) is satisfied. Besides, $\frac{p_j}{(1-\alpha_j)^2} = p_jq_j^2$ and (\ref{eigen2}) follows from $\sum_j p_j=1$ and $q_j<1$.  Thus, 
$\lambda=1$ is an eigenvalue. 

The next lemma describes the whole spectrum of $P$. 

\begin{lemma}\label{lm:spectrump}
Assume that all numbers $p_i$ are pairwise distinct, i.e. $p_1>p_2> p_3> \dots$. 
The random walk operator $P\colon L^2(G, m)\to L^2(G, m)$ has a unique positive eigenvalue $1$ and negative eigenvalues 
$\lambda_1<\lambda_2< \dots <0$
satisfying
$$\alpha_i <\lambda_i< \alpha_{i+1}, \quad i=1, 2, \dots. $$
Additionally, $0$ belongs to the spectrum (as an accumulation point) and is the only point of the spectrum which is not an eigenvalue. 
\end{lemma}
\begin{proof} Consider the function 
\begin{eqnarray}\label{function}
F(\lambda) = \sum_{j=1}^\infty \frac{\alpha_j}{\alpha_j -\lambda}, \quad \quad \lambda\in \Omega = \real -\{0, \alpha_1, \alpha_2, \dots\}.\end{eqnarray}
First, we verify that the series (\ref{function}) converges for all  $\lambda\in \Omega$. 
Indeed, if $\lambda>0$ then $\lambda - \alpha_j  > \lambda$ for all $j\ge 1$ and 
$$\sum_{j\ge 1} \frac{\alpha_j}{\alpha_j - \lambda } \, < \, \lambda^{-1} \sum_{j\ge 1} (-\alpha_j ) \, < \, \lambda^{-1}.$$
Consider now the case when $\lambda\in (\alpha_i, \alpha_{i+1})$. 
Then for $j\ge i+2$ one has $\alpha_j - \lambda \, >\, \alpha_{i+2}-\alpha_{i+1}$ and therefore 
$$\sum_{j\ge i+2}\frac{\alpha_j}{\alpha_j -\lambda}\, \le \, (\alpha_{i+2}-\alpha_{i+1})^{-1}\cdot \sum_{j\ge i+2}(-\alpha_j)\ < \ \infty.$$

For $\epsilon >0$, let $\Omega_\epsilon$ denote $\real - \cup_{j\ge 1} B(\alpha_j, \epsilon)$, where $B(\alpha_j, \epsilon)$ stands for the open ball with centre $\alpha_j$ and radius $\epsilon$. 
The arguments of the preceding paragraph applied to the series of derivatives
\begin{equation}\label{der}
\sum_{j=1}^\infty \frac{\alpha_j}{(\alpha_j -\lambda)^2}
\end{equation}
show that the series (\ref{der}) converges uniformly on $\Omega_\epsilon$. 

Therefore, we obtain that the function 
$F(\lambda)$ is differentiable and its derivative $F'(\lambda)$ is given by the series (\ref{der}) for all $\lambda\in \Omega$. 

Each term of the series (\ref{der}) is negative; this implies that $F(\lambda)$ is monotone decreasing on every interval contained in $\Omega$. Thus, equation (\ref{eigen1}) may have at most one solution in any such interval. 

Consider the behaviour of $F(\lambda)$ on one of the intervals $\lambda \in (\alpha_i, \alpha_{i+1})$. It is obvious that for 
$\lambda\to \alpha_i$ the function $F(\lambda)$ tends to $+\infty$ and for $\lambda\to \alpha_{i+1}$ one has $F(\lambda)\to -\infty$. 

There are also two infinite maximal intervals of continuity $(-\infty, \alpha_1)$ and $(0, \infty)$. The function $F(\lambda)$ is negative for $\lambda\in (-\infty, \alpha_1)$ and has limits $0$ and $-\infty$ at the end points. Besides, $F(\lambda)$ is 
positive on $(0, \infty)$ and its limits at the end points are $\infty$ and $0$. Figure \ref{graph3} summarises the above arguments. 
\begin{figure}[h]
\includegraphics[scale=0.4]{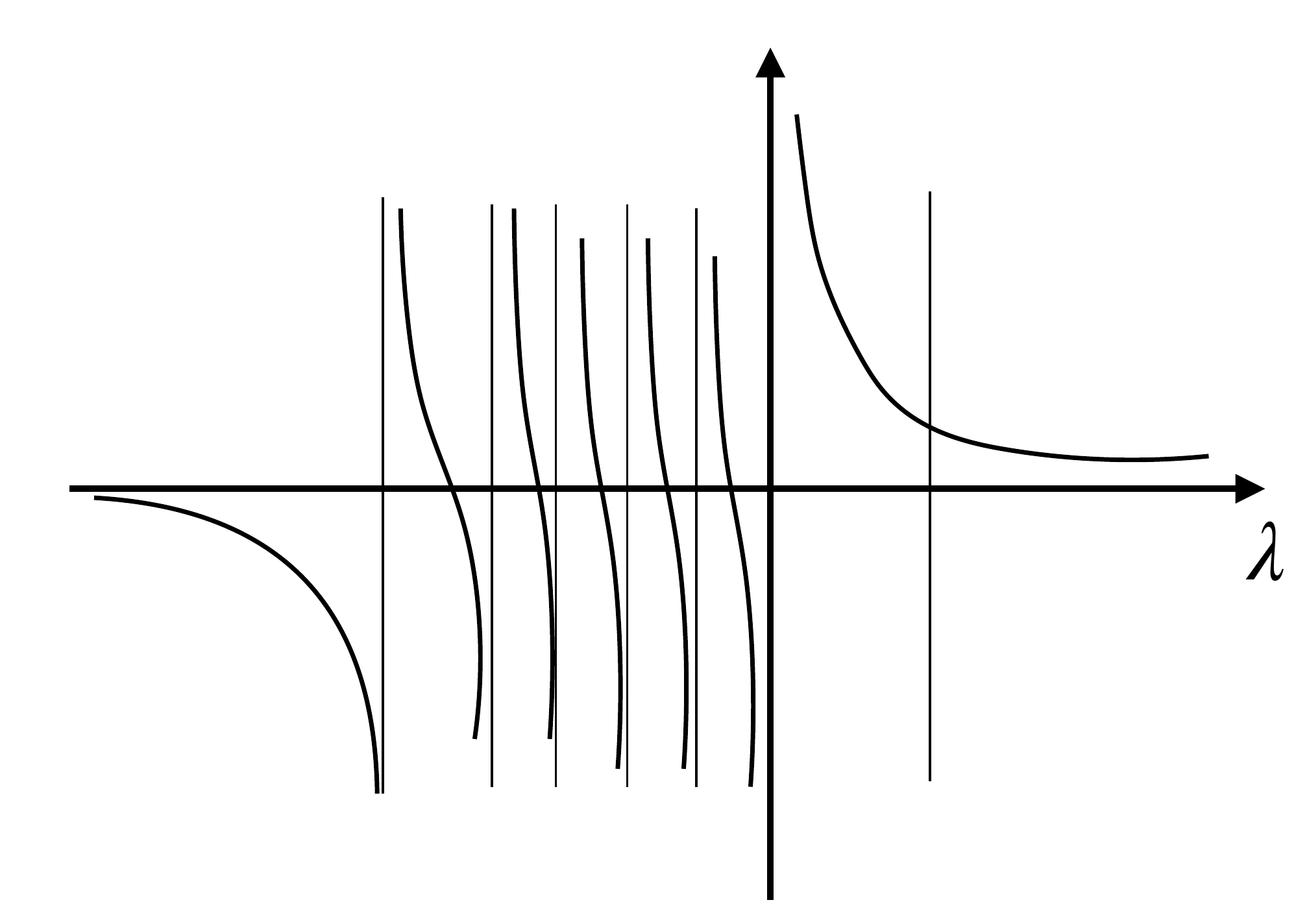}
\caption{The graph of the function $F(\lambda)$.}\label{graph3}
\end{figure}
Thus we see that there is a unique solution of (\ref{eigen1}) in each interval $(\alpha_i, \alpha_{i+1})$; besides, there is a unique solution of (\ref{eigen1}) in the interval $(0, \infty)$ which, as we know, is $\lambda=1$. 

Finally, we note that any of the above solutions automatically satisfies (\ref{eigen2}). Indeed, if $\lambda\in (\alpha_i, \alpha_{i+1})$ then $\alpha_j -\lambda >\alpha_{i+2}-\alpha_{i+1}$ and 
$$\sum_{j\ge i+2} p_j(\lambda - \alpha_j)^{-2} < (\alpha_{i+2}-\alpha_{i+1})^{-2}\cdot \sum_{j\ge i+2} p_j\, <\, \infty.$$
This completes the proof. 
\end{proof}

We may now restate the results of this section for the Laplacian $\Delta=I-P$. 

\begin{proposition}\label{prop:spectrumd} Consider the infinite complete graph $(G, m)$ with $V(G) =\Bbb N$ and weights $m(ij)=p_ip_j$ where $p_1>p_2>\dots >0$ is a sequence with $\sum_{i\ge 1} p_i=1$. 
The spectrum of the Laplacian $\Delta: L^2(G, m)\to L^2(G, m)$ contains the point $1$ as an accumulation point and the other points of the spectrum are simple eigenvalues. The point $0$ is the unique eigenvalue in the interval $[0, 1)$. The remaining eigenvalues $\dots \, <\, \mu_3\, <\,  \mu_2\, <\,  \mu_1\, \le\, 2$ lie 
in $(1, 2)$ and satisfy 
\begin{eqnarray}
q_{i+1}^{-1} <\mu_i< q_i^{-1},\quad  \mbox{where}\quad q_i=1-p_i, \quad \mbox{for}\quad i=1, 2, \dots,
\end{eqnarray}
hence $\mu_i\to 1$. More specifically, for $i=1, 2, \dots,$ each eigenvalue $\mu_i$ is the unique solution of the equation
\begin{eqnarray}\label{equation}
\sum_{j=1}^\infty \frac{p_j}{q_j \mu -1} \, =\, -1
\end{eqnarray}
lying in the interval $(q_{i+1}^{-1}, q_i^{-1})$. 
The spectral gap equals $1$ and top of the spectrum $\mu_{top}=\mu_1$ satisfies
\begin{eqnarray}\label{47}
q_2^{-1}\, < \, \mu_{top}\, <\,  q_1^{-1}. 
\end{eqnarray}
\end{proposition}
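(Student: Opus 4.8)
\textbf{Proof proposal for Proposition \ref{prop:spectrumd}.}

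The plan is to translate everything already established for the random walk operator $P$ in Lemma \ref{lm:spectrump} into statements about $\Delta = I - P$, using the elementary correspondence $\mu \in \sigma(\Delta) \iff 1 - \mu \in \sigma(P)$, and then to rewrite the eigenvalue equation \eqref{eigen1} in the variable $\mu$. Since $P$ has the single positive eigenvalue $1$, which corresponds to $\mu = 0$, and since this is the unique eigenvalue in $[0,1)$ for $\Delta$; the accumulation point $0 \in \sigma(P)$ corresponds to the accumulation point $1 \in \sigma(\Delta)$; and the negative eigenvalues $\lambda_i < 0$ of $P$ correspond to eigenvalues $\mu_i = 1 - \lambda_i > 1$ of $\Delta$. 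Each inequality $\alpha_i < \lambda_i < \alpha_{i+1}$ from Lemma \ref{lm:spectrump} becomes $1 - \alpha_{i+1} < \mu_i < 1 - \alpha_i$ after subtracting from $1$ and reversing the inequalities. Recalling that $\alpha_i = -p_i q_i^{-1}$, I would compute $1 - \alpha_i = 1 + p_i q_i^{-1} = (q_i + p_i) q_i^{-1} = q_i^{-1}$, which immediately yields the claimed bounds $q_{i+1}^{-1} < \mu_i < q_i^{-1}$ and hence $\mu_i \to 1$ since $q_i \to 1$.

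Next I would derive the equation \eqref{equation} by substituting $\lambda = 1 - \mu$ into the characteristic equation \eqref{eigen1}. Each summand becomes
\[
\frac{\alpha_j}{\alpha_j - \lambda} = \frac{-p_j q_j^{-1}}{-p_j q_j^{-1} - (1 - \mu)} = \frac{p_j}{p_j + q_j(1 - \mu)} = \frac{p_j}{p_j + q_j - q_j \mu} = \frac{p_j}{1 - q_j \mu},
\]
using $p_j + q_j = 1$. Setting the sum equal to $1$ and rearranging the sign gives $\sum_j \frac{p_j}{q_j \mu - 1} = -1$, which is exactly \eqref{equation}; the interval $(\alpha_i, \alpha_{i+1})$ for $\lambda$ transforms into $(q_{i+1}^{-1}, q_i^{-1})$ for $\mu$, matching the claimed localisation of the unique root. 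The uniqueness of the solution in each interval is inherited directly from the monotonicity of $F$ established in the proof of Lemma \ref{lm:spectrump}.

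The assertion on the spectral gap follows because $0$ is an isolated eigenvalue of $\Delta$ and the next eigenvalue above it is $1$ (the accumulation point), so $\lambda_{gap} = 1$; this is consistent with the computation in Example \ref{ex:comp1} and the earlier lemma showing $\lambda_{gap} \le 1$ for infinite complete graphs. The top of the spectrum is $\mu_{top} = \mu_1$, the largest eigenvalue, and applying the bound for $i = 1$ gives $q_2^{-1} < \mu_{top} < q_1^{-1}$, establishing \eqref{47}. I do not anticipate a genuine obstacle here: the entire proposition is a change-of-variables bookkeeping exercise built on Lemma \ref{lm:spectrump}. The only point demanding mild care is verifying the direction-reversal of all the strict inequalities under $\mu = 1 - \lambda$ and confirming that the endpoint arithmetic $1 - \alpha_i = q_i^{-1}$ is applied consistently; everything else is a direct restatement.
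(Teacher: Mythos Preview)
Your proposal is correct and follows essentially the same approach as the paper: both reduce the proposition to Lemma \ref{lm:spectrump} via the affine change of variables $\mu = 1 - \lambda$, noting that $1 - \alpha_j = q_j^{-1}$. Your write-up is in fact more detailed than the paper's two-sentence proof; the only minor imprecision is calling $1$ ``the next eigenvalue'' when it is the accumulation point (not itself an eigenvalue), but your parenthetical and the conclusion $\lambda_{gap}=1$ are correct.
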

\begin{proof}
Since $\Delta = I-P$ we see that the result of Proposition \ref{prop:spectrumd} follows from Lemma \ref{lm:spectrump} by applying the affine transformation $x\mapsto 1-x$. The points of division $\alpha_j$  are mapped into $1-\alpha_j= q_j^{-1}$.
\end{proof}

We can mention another form (\ref{equation4}) of the eigenvalue equation (\ref{equation}) which involves the quantities $r_j=q_j^{-1}$, where $j=1, 2, \dots$
\begin{eqnarray}\label{equation4}
\sum_{j=1}^\infty \frac{r_j-1}{r_j-\mu} =1.
\end{eqnarray}
Note that here $r_j>1$ and $r_j\to 1$. We know that equation (\ref{equation4}) has exactly one solution in each interval 
$(r_{j+1}, r_{j})$ and $\mu=0$ is an additional solution.

\section{Improved estimates for $\mu_1=\mu_{top}$}\label{sec9}

Our goal in this section is to strengthen the inequality (\ref{47}) for the top eigenvalue $\mu_1=\mu_{top}$. Similar method can be applied for more precise estimates of the other eigenvalues. 

Recall that the parameters of the infinite complete graph satisfy $p_1>p_2>\dots$ and $\sum_j p_j =1$. We denote 
$r_j=(1-p_j)^{-1}$. We have $r_1 >r_2>r_3>\dots >1$ and $r_j\to 1$. 
Consider the following quadratic equation 
\begin{eqnarray}\label{equation2b}
\frac{r_1-1}{r_1-\mu}+ \frac{r_2-1}{r_2-\mu} \, =\, 1-x,
\end{eqnarray}
where $x$ is a parameter. 

\begin{lemma} \label{improveb} 
\hfill
\begin{enumerate}
\item[{\rm (A)}] For any value of the parameter $x$ the quadratic equation (\ref{equation2b}) has a unique solution $\mu(x)$ lying in the interval 
$(r_2, r_1)$.  
\item[{\rm (B)}] For $x<1$, the other solution of  the equation (\ref{equation2b}) lies in the interval $(-\infty, r_2)$; for $x>1$,
 the other solution of (\ref{equation2b})
 lies in the interval $(r_1, \infty)$; for $x=1$ the equation (\ref{equation2b}) does not have solutions outside the interval 
 $(r_2, r_1)$. 
\item[{\rm (C)}] $\mu(x)$ is a decreasing function of $x$. 
\item[{\rm (D)}] The top of the spectrum $\mu_{top}=\mu_1$ of the Laplacian $\Delta$ satisfies
\begin{eqnarray}\label{ineqmub}
\mu(x_+) < \mu_{top} < \mu(x_-),
\end{eqnarray}
where $$x_+= \frac{1-p_1-p_2}{1-r_1} \quad \mbox{and}\quad x_-= (1-p_1-p_2)\cdot \frac{r_3}{r_3-r_2}.$$
\end{enumerate}
\end{lemma}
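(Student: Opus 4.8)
The plan is to study the left-hand side of (\ref{equation2b}) as a function of the unknown,
\[
\Phi(\mu) = \frac{r_1-1}{r_1-\mu} + \frac{r_2-1}{r_2-\mu},
\]
and to read off (A)--(C) from its elementary behaviour, reserving the real work for the tail estimate in (D). Since $r_1-1>0$ and $r_2-1>0$, the derivative $\Phi'(\mu)=\frac{r_1-1}{(r_1-\mu)^2}+\frac{r_2-1}{(r_2-\mu)^2}$ is strictly positive wherever defined, so $\Phi$ is strictly increasing on each of the intervals $(-\infty,r_2)$, $(r_2,r_1)$, $(r_1,\infty)$. Checking boundary values shows that $\Phi$ runs from $-\infty$ to $+\infty$ on $(r_2,r_1)$, from $0^+$ to $+\infty$ on $(-\infty,r_2)$, and from $-\infty$ to $0^-$ on $(r_1,\infty)$. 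The first of these yields a unique solution $\mu(x)$ of $\Phi(\mu)=1-x$ in $(r_2,r_1)$ for every $x$, which is (A).

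For (B) I would clear denominators in (\ref{equation2b}) to get a polynomial equation whose leading coefficient is $1-x$: for $x\neq 1$ it is genuinely quadratic, hence has exactly one further real root (real because one root $\mu(x)$ is already real), which must be the one lying outside $(r_2,r_1)$; for $x=1$ it degenerates to a linear equation and so has no root outside $(r_2,r_1)$. The location of the second root then follows from the boundary values above: $\Phi$ attains $1-x$ on $(-\infty,r_2)$ exactly when $1-x>0$, i.e. $x<1$, and on $(r_1,\infty)$ exactly when $1-x<0$, i.e. $x>1$. Statement (C) is immediate from the monotonicity of $\Phi$ on $(r_2,r_1)$: differentiating $\Phi(\mu(x))=1-x$ gives $\mu'(x)=-1/\Phi'(\mu(x))<0$, so $\mu(x)$ is strictly decreasing.

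The crux is (D). Here I would use that, by Proposition \ref{prop:spectrumd} in the form (\ref{equation4}), the top eigenvalue $\mu_{top}=\mu_1\in(r_2,r_1)$ solves $\sum_{j\ge 1}\frac{r_j-1}{r_j-\mu}=1$. Splitting off the first two terms, $\mu_{top}$ satisfies (\ref{equation2b}) with parameter equal to the tail $x^*=\sum_{j\ge 3}\frac{r_j-1}{r_j-\mu_{top}}$; by the uniqueness in (A) this forces $\mu_{top}=\mu(x^*)$. Since $\mu(\cdot)$ is decreasing by (C), the two inequalities in (\ref{ineqmub}) are equivalent to the single chain $x_-<x^*<x_+$, and the whole problem reduces to bounding the tail sum $x^*$ by the two stated closed-form quantities.

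To obtain these bounds I would exploit the identity $r_j-1=p_jr_j$ (immediate from $r_j=(1-p_j)^{-1}$) together with $r_2<\mu_{top}<r_1$ and $r_j<r_3\le r_2$ for $j\ge 3$, so that every tail term is negative. For the upper bound, replacing $\mu_{top}$ by $r_1$ and then using the elementary inequality $\frac{r_j}{r_j-r_1}<\frac{1}{1-r_1}$ (valid for $1<r_j<r_1$, proved by clearing denominators, whose numerator becomes $r_1(1-r_j)<0$) gives $\frac{r_j-1}{r_j-\mu_{top}}<\frac{p_j}{1-r_1}$ termwise; summing over $j\ge 3$ with $\sum_{j\ge 3}p_j=1-p_1-p_2$ yields $x^*<x_+$. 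For the lower bound, replacing $\mu_{top}$ by $r_2$ and using that $r\mapsto\frac{r}{r-r_2}$ is decreasing on $(1,r_2)$ (so its value at $r_j\le r_3$ is at least its value at $r_3$) gives $\frac{r_j-1}{r_j-\mu_{top}}>\frac{p_jr_3}{r_3-r_2}$ termwise; summing yields $x^*>x_-$. I expect this last paragraph to be the main obstacle: the two termwise rational inequalities, and keeping the inequality directions correct through the sign changes of the denominators, are where all the care is needed, whereas everything preceding is soft monotonicity. Combining $x_-<x^*<x_+$ with (C) then delivers (\ref{ineqmub}) and completes the proof.
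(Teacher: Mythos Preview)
Your proof is correct and follows essentially the same route as the paper: the function you call $\Phi$ is the paper's $G$, the monotonicity/homeomorphism analysis for (A)--(C) is identical, and for (D) you carry out the very same tail estimate $x_-<x^*<x_+$ via the identity $r_j-1=p_jr_j$ and the termwise bounds obtained by replacing $\mu_{top}$ by $r_1$ and $r_2$ respectively. Your extra remark in (B) about clearing denominators to see the leading coefficient is $1-x$ is a small clarification the paper omits, but otherwise the arguments coincide.
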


\begin{proof}
The rational function $G(\mu)= \frac{r_1-1}{r_1- \mu}+ \frac{r_2-1}{r_2- \mu}$ has poles at $\mu=r_1$ and $\mu=r_2$, and it is increasing for $\mu\in (r_2, r_1)$. Its graph is shown in Figure \ref{graph5}. 
\begin{figure}[h]
\includegraphics[scale=0.3]{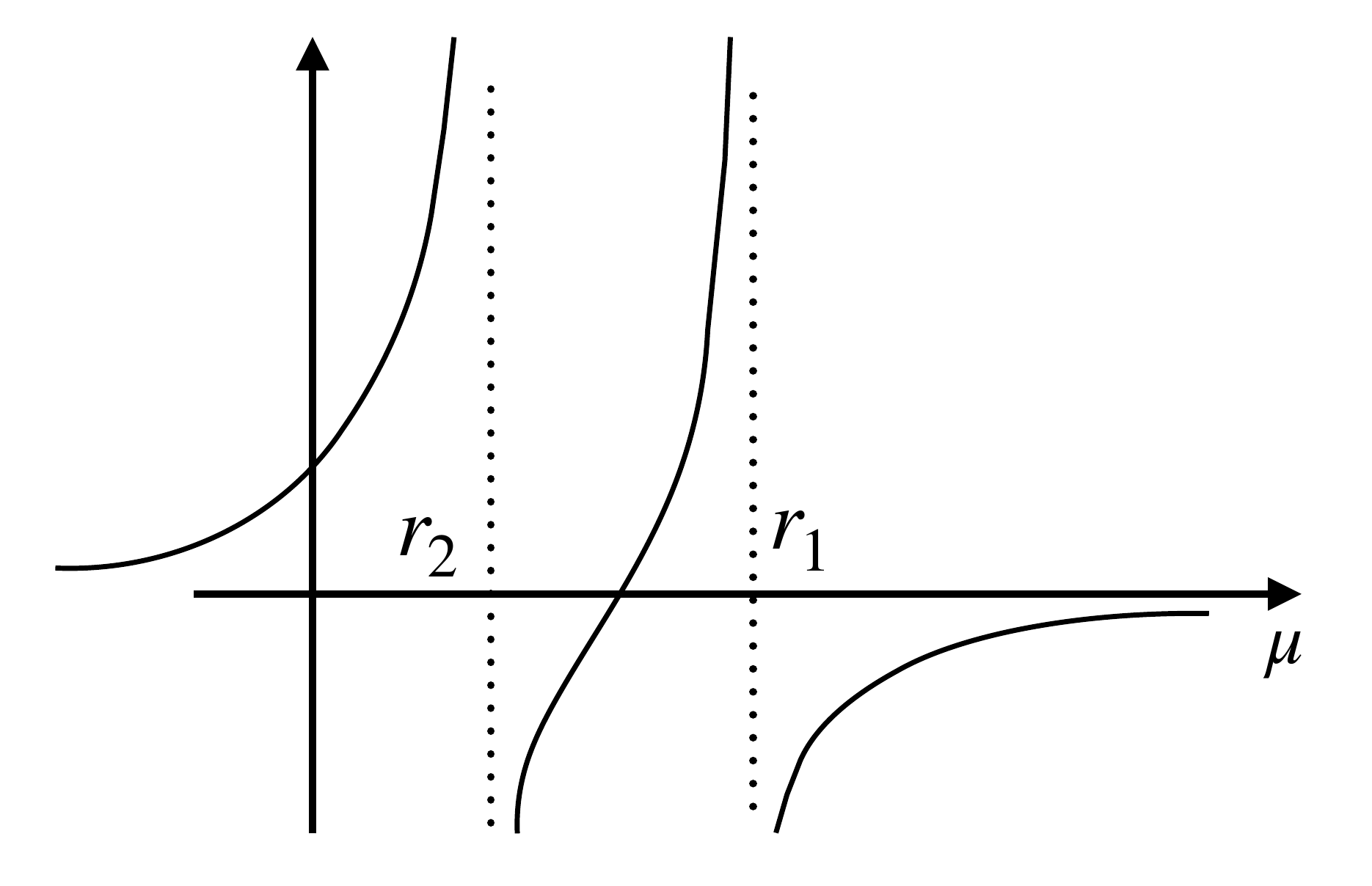}
\caption{The graph of the function $G(\mu)$.}\label{graph5}
\end{figure}
We easily see that $G\colon(r_2, r_1)\to \Bbb R$ is a homeomorphism, which implies (A). Similarly, we see that $G\colon (-\infty, r_2)\to (0, \infty)$ and $G\colon (r_1, \infty)\to (-\infty, 0)$ are monotone increasing homeomorphisms; this implies our statement (B).  

To prove (C), we differentiate the equation $G(\mu(x))=1-x$ getting
$\mu'(x) = - G'(\mu(x))^{-1}.$ This shows that $\mu'<0$ since $G'>0$. 

Finally, to prove (D), we note that in view of equation (\ref{equation4}) one has $\mu_{top}=\mu(x_0)$ with 
$$x_0=\sum_{j\ge 3} \frac{r_j-1}{r_j-\mu}, \quad\mbox{where}\quad \mu=\mu_{top}.$$
We know that $r_2< \mu_{top}< r_1$ and hence 
$$x_0< \sum_{j\ge 3} \frac{r_j-1}{r_j-r_1}= \sum_{j\ge 3} \frac{r_j}{r_j-r_1}\cdot p_j \le \frac{1}{1-r_1}\cdot (1-p_1-p_2) = x_+.$$
Here we used the equalities $\frac{r_j-1}{r_j}=p_j$ and $\sum_{j\ge 1} p_j=1$. Similarly, we have 
$$x_0>\sum_{j\ge 3} \frac{r_j-1}{r_j-r_2}  = \sum_{j\ge 3} \frac{r_j}{r_j-r_2}\cdot p_j \ge \frac{r_3}{r_3-r_2}\cdot 
(1-p_1-p_2)=x_-.$$
Thus, using (C), we conclude that $\mu(x_+)\le \mu_{top}=\mu(x_0)\le \mu(x_-). $
\end{proof}
Below we shall have specific examples illustrating Lemma \ref{improveb}.

\section{Asymmetry of the spectrum and the invariant $\k(G, m)$ for the infinite complete graph}\label{sec10}

In the section we continue studying the infinite complete graph and its spectrum. Our goal is to examine Theorem \ref{thm:mains} in this specific example. 

First we describe the Hausdorff distance $d_H(\sigma(\Delta), \mathcal R(\sigma(\Delta)))$. 

\begin{lemma}\label{asym1} If $\mu_1\le 3/2$ then 
\begin{eqnarray}\label{case1}
d_H(\sigma(\Delta), \mathcal R(\sigma(\Delta))) = 2-\mu_1\,  \ge\,  \frac{1}{2}.
\end{eqnarray}
If $\mu_1\ge 3/2$ then 
\begin{equation}\label{case2}
d_H(\sigma(\Delta), \mathcal R(\sigma(\Delta)))=\frac{1}{2} - \inf_{i\ge 1} \left|\mu_i - \frac{3}{2}  \right| \, \le \, \frac{1}{2}.
\end{equation}
\end{lemma}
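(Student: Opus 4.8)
The plan is to read off the exact structure of $\sigma(\Delta)$ from Proposition \ref{prop:spectrumd} and then evaluate the Hausdorff distance directly via the reduction in Lemma \ref{lm:isometry}. By that proposition, $\sigma(\Delta)=\{0\}\cup\{1\}\cup\{\mu_i\}_{i\ge 1}$, where $1$ is the unique accumulation point (not an eigenvalue), $0$ is the unique eigenvalue in $[0,1)$, and the $\mu_i$ form a strictly decreasing sequence in $(1,2)$ with $\mu_i\to 1$ and $\mu_1=\mu_{top}$ the largest. Reflecting, $\mathcal R(\sigma(\Delta))=\{2\}\cup\{1\}\cup\{2-\mu_i\}_{i\ge 1}$, where the points $2-\mu_i$ lie in $(0,1)$, increase to $1$, and have $2-\mu_1$ as their smallest element.

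By Lemma \ref{lm:isometry} the Hausdorff distance equals $\inf\{\epsilon>0:\mathcal R(\sigma(\Delta))\subset\sigma(\Delta)_\epsilon\}=\sup_{y\in\mathcal R(\sigma(\Delta))}d(y,\sigma(\Delta))$, so I would compute $d(y,\sigma(\Delta))$ for each $y\in\mathcal R(\sigma(\Delta))$ and take the supremum. The point $y=1$ lies in $\sigma(\Delta)$ and contributes $0$. For $y=2$, the nearest point of $\sigma(\Delta)$ is the largest eigenvalue $\mu_1$, so $d(2,\sigma(\Delta))=2-\mu_1$. For $y=2-\mu_i\in(0,1)$, the only points of $\sigma(\Delta)$ in $[0,1]$ are $0$ and $1$, whence $d(2-\mu_i,\sigma(\Delta))=\min\{2-\mu_i,\ \mu_i-1\}$, which by the elementary identity $\min\{a,b\}=\tfrac{a+b}{2}-\tfrac{|a-b|}{2}$ equals $\tfrac12-|\mu_i-\tfrac32|$. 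Taking the supremum over all these contributions yields
\[
d_H(\sigma(\Delta),\mathcal R(\sigma(\Delta)))=\max\Big\{\,2-\mu_1,\ \tfrac12-\inf_{i\ge 1}\big|\mu_i-\tfrac32\big|\,\Big\}.
\]

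It then remains to simplify this maximum according to the position of $\mu_1$ relative to $3/2$. If $\mu_1\le 3/2$ then $2-\mu_1\ge\tfrac12\ge\tfrac12-\inf_i|\mu_i-\tfrac32|$, so the maximum equals $2-\mu_1\ge\tfrac12$, giving (\ref{case1}). If $\mu_1\ge 3/2$ then $|\mu_1-\tfrac32|=\mu_1-\tfrac32$, so the $i=1$ term of the supremum is exactly $\tfrac12-|\mu_1-\tfrac32|=2-\mu_1$; hence $\tfrac12-\inf_i|\mu_i-\tfrac32|\ge 2-\mu_1$, the second term dominates, and $d_H=\tfrac12-\inf_i|\mu_i-\tfrac32|\le\tfrac12$, which is (\ref{case2}).

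I expect the only genuinely delicate step to be the careful identification of the nearest spectral point for each reflected value: verifying that for the reflected eigenvalues $2-\mu_i$ no $\mu_j$ can beat the isolated points $0$ and $1$ (which holds because $2-\mu_i<1<\mu_j$ forces $\mu_j-(2-\mu_i)>\mu_i-1$), and keeping track of whether the supremum is governed by the reflection of $0$ (the point $2$, at distance $2-\mu_1$) or by the reflected eigenvalues clustering near $1$. Everything else is routine bookkeeping built on the identity relating $\min\{2-\mu_i,\mu_i-1\}$ to $\tfrac12-|\mu_i-\tfrac32|$.
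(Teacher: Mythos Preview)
Your proposal is correct and follows essentially the same approach as the paper: both invoke Lemma \ref{lm:isometry}, compute the distance from each point of $\mathcal R(\sigma(\Delta))$ to $\sigma(\Delta)$ using the explicit description of the spectrum from Proposition \ref{prop:spectrumd}, rewrite $\min\{2-\mu_i,\mu_i-1\}=\tfrac12-|\mu_i-\tfrac32|$, and then split into the two cases according to whether $\mu_1\lessgtr 3/2$. Your write-up is in fact slightly more explicit than the paper's in justifying why no $\mu_j$ can be the nearest spectral point to a reflected value $2-\mu_i$, and in first expressing $d_H$ as a maximum of two terms before simplifying.
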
 
\begin{proof} We know that $\sigma(\Delta) = \{0, 1\}\cup \{\mu_1, \mu_2, \dots \}$ where $\mu_1>\mu_2>\dots >1$, $\mu_i\to 1$.  Hence,  
$\mathcal R(\sigma(\Delta)) = \{1, 2\}\cup \{\bar \mu_1, \bar \mu_2, \dots, \},$ where $\bar \mu_i= 2-\mu_i$. 
We intend to apply Lemma \ref{lm:isometry}. We have: 
$\inf\{\epsilon; \, 2\in \sigma(\Delta)_\epsilon\} = 2-\mu_1$. Besides, $\inf\{\epsilon; \, 1\in \sigma(\Delta)_\epsilon\}=0$ and 
for $i=1, 2, \dots$, one has $\inf\{\epsilon; \, \bar \mu_i\in \sigma(\Delta)_\epsilon\}=\min \{ 2-\mu_i, \mu_i-1\}.$ This clearly implies that, for $\mu_1\le 3/2$, the Hausdorff distance $d_H(\sigma(\Delta), \mathcal R(\sigma(\Delta)))$ equals $2-\mu_1$. 
Consider now the case $\mu_1\ge 3/2$. By the above, the Hausdorff distance $d_H(\sigma(\Delta), \mathcal R(\sigma(\Delta)))$ in this case equals 
$\sup_{i\ge 1} \{ \min\{2-\mu_i, \mu_i-1\}\}.$
We may write 
$$\min\{2-\mu_i, \mu_i-1\}= \frac{1}{2} + \min\{\frac{3}{2} -\mu_i, \mu_i-\frac{3}{2}\}= \frac{1}{2} - \left |\mu_i-\frac{3}{2}\right |.$$
This implies (\ref{case2}). 
\end{proof}
Next we consider our invariant $\k(G, m)$, see Definition \ref{def:kab}. For a subset $A\subset \Bbb N=\V(G)$ we set $P(A) =\sum_{i\in A} p_i$ and $p_{\rm {min}}(A) = \inf_{i\in A}p_i$. Then for a partition $A\sqcup B = \Bbb N$ one has 
$$\k(A,B) = \max\left\{ \frac{P(A)-p_{\rm {min}}(A)}{1-p_{\rm {min}}(A)}, \frac{P(B)-p_{\rm {min}}(B)}{1-p_{\rm {min}}(B)}\right\}.$$
For an infinite set $A\subset \Bbb N=\V(G)$ clearly $p_{\rm {min}}(A)=0$. Hence, if both sets $A, B$ are infinite, we have
$$\k(A, B)=\max\{P(A), P(B)\}\ge 1/2$$ since $P(A)+P(B)=1$. Therefore, the infimum of the numbers $\k(A,B)$ taken over all partitions 
$A\sqcup B=\Bbb N$ with both sets $A, B$ infinite equals 
$$\inf\{P(A); \, A\subset \Bbb N \, \,  \mbox{is infinite and} \,\, P(A)\ge 1/2\}.$$


Next, we  consider partitions $A\sqcup B=\Bbb N$ with $A$ finite and hence $B$ infinite. 
We shall be mainly interested in the situation when $p_1\ge 1/2$. In this case one may take the partition 
$A'=\{1\}$, $B'=\{2, 3, \dots\}$; then $\k(A', B')=P(B')=1-p_1$. 

Let us show that, in fact,  $$\k(G, m)= 1- p_1$$ under the assumption $p_1\ge 1/2$. All partitions with both sets $A$ and $B$ infinite give $\k(A, B)\ge 1/2\ge 1-p_1$. 
Consider a partition $A\sqcup B=\Bbb N$ with finite $A$, $A\not=\{1\}$. If $p_{\rm {min}}(A) =p_i$, $i>1$, then 
$$\k(A, B) = \max\left\{\frac{P(A)-p_i}{1-p_i}, P(B)\right\}.$$
If $1\notin A$ then $1\in B$ and $\k(A, B)= P(B) \ge 1-p_1.$ Consider now the remaining case: $1\in A$ and $p_{\rm {min}}(A)=p_i$ where $i>1$. Then $P(B)\le 1-p_1-p_i<1/2$ and 
$$\frac{P(A)-p_i}{1-p_i}\ge \frac{p_1}{1-p_i}\ge 1/2,$$ and thus $\k(A, B) = \frac{P(A)-p_i}{1-p_i}\ge 1/2 \ge 1-p_1.$
We summarise our above arguments as follows.

\begin{lemma}\label{lm:36}
If $p_1\ge 1/2$ then $\k(G, m)=1-p_1$. 
\end{lemma}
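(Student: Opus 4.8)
The plan is to establish the two inequalities $\k(G,m)\le 1-p_1$ and $\k(G,m)\ge 1-p_1$ separately, using throughout the explicit formula for $\k(A,B)$ recorded just before the statement: for a partition $A\sqcup B=\mathbb N$ one has $\k(A,B)=\max\{g_A,g_B\}$ where $g_A=\frac{P(A)-p_{\min}(A)}{1-p_{\min}(A)}$ and $g_B$ is defined symmetrically. First I would recall why this formula holds, since everything rests on it: for $v\in A$ one computes $p_A(v)=\frac{P(A)-p_v}{q_v}=\frac{P(A)-p_v}{1-p_v}$, and the map $p\mapsto\frac{P(A)-p}{1-p}$ is non-increasing because its derivative has the sign of $P(A)-1\le 0$; hence $\sup_{v\in A}p_A(v)$ is attained at the vertex of smallest weight, i.e. at $p_{\min}(A)$ (which is $p_{\max A}$ for finite $A$ and $0$ for infinite $A$, as $p_i\to 0$).

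For the upper bound a single test partition suffices: take $A'=\{1\}$ and $B'=\{2,3,\dots\}$. The vertex $1$ has no neighbour inside $A'$, so $g_{A'}=0$; and $B'$ is infinite, so $p_{\min}(B')=0$ and $g_{B'}=P(B')=1-p_1$. Thus $\k(A',B')=1-p_1$, and since $\k(G,m)$ is an infimum over all partitions we get $\k(G,m)\le 1-p_1$ for free.

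The substance is the lower bound: I would show $\k(A,B)\ge 1-p_1$ for \emph{every} partition, splitting into cases and crucially feeding in the hypothesis $p_1\ge 1/2$ (equivalently $1-p_1\le 1/2$). If both $A$ and $B$ are infinite, then $g_A=P(A)$ and $g_B=P(B)$, so $\k(A,B)=\max\{P(A),P(B)\}\ge 1/2\ge 1-p_1$ since $P(A)+P(B)=1$. Otherwise one block, say $A$, is finite and $B$ infinite, so $g_B=P(B)$; write $p_{\min}(A)=p_i$ with $i=\max A$. If $1\notin A$ then $1\in B$, giving $P(B)\ge p_1\ge 1/2\ge 1-p_1$. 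If $1\in A$ and $A\ne\{1\}$, then $\{1,i\}\subseteq A$ with $i>1$ forces $P(A)-p_i\ge p_1$, so $g_A=\frac{P(A)-p_i}{1-p_i}\ge\frac{p_1}{1-p_i}\ge p_1\ge 1/2\ge 1-p_1$. The remaining case $A=\{1\}$ is precisely the extremal partition from the upper bound. In all cases $\k(A,B)\ge 1-p_1$, so $\k(G,m)\ge 1-p_1$, and combined with the upper bound this gives equality.

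Everything here is elementary, so I do not expect a serious obstacle; the point requiring the most care is the bookkeeping in the finite-block case. One must correctly identify $p_{\min}(A)=p_{\max A}$ from $p_1>p_2>\cdots$, treat the degenerate subcase $A=\{1\}$ where $g_A$ collapses to $0$, and make sure the assumption $p_1\ge 1/2$ is invoked at exactly the step that upgrades the generic bound $1/2$ to the sharp value $1-p_1$. This hypothesis is essential: without it the partition $\{1\}\mid\{2,3,\dots\}$ need not be optimal, and the infimum would instead be governed by the infinite-block bound $1/2$.
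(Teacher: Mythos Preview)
Your proof is correct and follows essentially the same route as the paper: the upper bound via the test partition $A'=\{1\}$, $B'=\{2,3,\dots\}$, and the lower bound by the same case split (both blocks infinite; finite block not containing $1$; finite block containing $1$ but not equal to $\{1\}$), each time using $p_1\ge 1/2$ to pass from the generic bound $1/2$ to $1-p_1$. Your added justification of the formula $\k(A,B)=\max\{g_A,g_B\}$ via the monotonicity of $p\mapsto (P(A)-p)/(1-p)$ is a welcome clarification that the paper leaves implicit.
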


\begin{example}
Consider an infinite complete graph with the following parameters: $p_1=0.9$, $p_2=0.09$, $p_3=0.009$; the values $p_4, p_5, \dots$ will be irrelevant for our estimates below. 

By Lemma \ref{lm:36}, one has $\k(G, m)=0.1$ and applying Theorem \ref{thm:mains} we obtain that the spectral asymmetry 
satisfies $$d_H(\sigma(\Delta), \mathcal R(\sigma(\Delta)))\le 0.2.$$ 
Next, we apply Lemma \ref{asym1}, which gives $\mu_1>3/2$, and the equality (\ref{case2}) implies
$$\inf_{i\ge 1} \left | \mu_i-\frac{3}{2}\right |\ge 0.5- 0.2= 0.3.$$
In other words, the open interval 
$$(1.2, 1.8)=\left(\frac{3}{2}-\frac{3}{10}, \frac{3}{2}+\frac{3}{10}\right)\subset [0,2]$$ 
contains no eigenvalues. Thus, our inequality (\ref{Hausdorff}) allows finding lacunas in the spectrum. 

We can also estimate the Hausdorff distance $d_H(\sigma(\Delta), \mathcal R(\sigma(\Delta)))$ using information about the eigenvalues. 
We have $r_1= q_1^{-1} = 10$, $r_2= q_2^{-1}=\frac{100}{91}$ and hence $1.099< \mu_1< 10$, by the inequality (\ref{47}), which is not informative enough.  We can improve the bound (\ref{47}) using Lemma \ref{improveb}. We obtain
$x_+= - \frac{0.01}{9}\approx - 0.001111$, and $x_-= -\frac{0.01}{\frac{0.991}{0.91}-1}\approx - 0.11235$. Solving the quaratic equation (\ref{equation2b}) we obtain
$1.94747 \le \mu_1\le 2.4194,$ which, of course, should be understood as
$$1.94747 \le \mu_1\le 2.$$
Next we estimate $\mu_2$ using (\ref{47}). We obtain 
$$1.00908\approx \frac{1000}{991}= r_3 =q_3^{-1} \, <\, \mu_2\, <\,  r_2= q_2^{-1} =\frac{100}{91}\approx 1.099.$$
Thus, using Lemma \ref{asym1}, we obtain for the Hausdorff distance
$$0.00908 \le d_H(\sigma(\Delta), \mathcal R(\sigma(\Delta)))\le 0.099.$$
\end{example} 

\begin{remark}
It is an interesting inverse problem whether the spectrum of the infinite complete graph determines the sequence of shape parameters $p_1, p_2, \dots, $ satisfying $\sum_{i\ge 1} p_i=1$. 
\end{remark}

 \bibliographystyle{amsplain}

\begin{thebibliography}{99}
 
 \bibitem{B14} F. Bauer, B. Hua and J{\"u}rgen Jost,  
 \textit{The dual Cheeger constant and spectra of infinite graphs, }
 Advances in Mathematics, 251(2014), 147 - 194. 
	
\bibitem{JEMS15} F. Bauer, M. Keller and R. K. Wojciechowski, 
	\textit{Cheeger inequalities for unbounded graph Laplacians}, Journal of the European Mathematical Society, 
	{\bf 17} (2015), 259 - 271. 
	
 
 \bibitem{Chu} 
 F. R. K. Chung, Spectral graph theory. 
CBMS Regional Conference Series in Mathematics, 92. AMS, 
 Providence, RI, 1997. xii+207 pp.
 
 \bibitem{D84} J. Dodzuik, \textit{Difference equations, isoperimetric inequalities and transience of certain random walks}, 
 Transactions of the American Mathematical Society, {\bf 284}(1984). 

\bibitem{HLW06} S. Hoory, N. Linial and A. Wigderson, \textit{Expander graphs and their applications}, Bulletin of the AMS,
{\bf 43}(2006), 439 - 561. 
	
	
 
 
 \bibitem{Kato} T. Kato, PerturbationTheory for Linear Operators, Springer-Verlag 1980. 
 
 \bibitem{Keller} M. Keller, D. Lenz, R. Wojciechowski, Radosław K. 
\textit{Graphs and discrete Dirichlet spaces.}
Grundlehren der mathematischen Wissenschaften [Fundamental Principles of Mathematical Sciences], 358. Springer, 2021.
 
 \bibitem{Kr} E. Kreyszig, 
 Introductory functional analysis and applications, John Wiley \& Sons, 1978

 \bibitem{MS03} S. Mokhtari-Sharghi, \textit{Cheeger inequality for infinite graphs},
 Geometriae Dedicata, {\bf 100}(2003), 53 - 64. 
	
 
 \bibitem{RS} M. Reed, B. Simon, Methods of modern mathematical physics, IV, Analysis of operators, 
 Academic Press,1978
 
 \bibitem{Wo} W. Woess, Random walks on infinite graphs and groups, Cambridge University Press, 2000. 
 
 
 

 \end{thebibliography}

\end{document}